\DeclareFontFamily{OT1}{rsfs}{}
\DeclareFontShape{OT1}{rsfs}{n}{it}{<-> rsfs10}{}
\DeclareMathAlphabet{\mathscr}{OT1}{rsfs}{n}{it}
\numberwithin{equation}{section}
\theoremstyle{definition}
\newtheorem{definition}{Definition}[section]
\newtheorem{question}[definition]{Question}
\theoremstyle{remark}
\newtheorem{remark}[definition]{Remark}
\theoremstyle{plain}
\newtheorem{theorem}[definition]{Theorem}
\newtheorem{result}[definition]{Result}
\newtheorem{lemma}[definition]{Lemma}
\newtheorem{proposition}[definition]{Proposition}
\newtheorem{corollary}[definition]{Corollary}
\definecolor{DPurple}{rgb}{0.76,0.2,0.69}
\newcommand{\nice}{\Sigma}
\newcommand{\res}{{\rm {Res}}}
\newcommand{\discr}{{\rm {Disc}}}
\newcommand{\trueres}{{\rm {res}}}
\newcommand{\lar}{\mathbb B_r}
\newcommand{\capa}{{\rm cap}}
\newcommand{\homcap}{{\rm{cap}}_h}
\newcommand{\Cancap}{{\rm{cap_{0, \infty}}}}
\newcommand{\gr}{{\rm{g}}}
\newcommand{\ext}{{V}}
\newcommand{\robin}{{\varrho}}
\newcommand{\robcons}{{\gamma_{0, \infty}}}
\newcommand{\ppolar}{{\mathcal E}}
\newcommand{\U}{U_h}
\newcommand{\I}{{I_h}}
\newcommand{\hgtone}{{\rm h}}
\newcommand{\hgttwo}{{\mathscr H}}
\newcommand{\rat}{{r}}
\newcommand{\zero}{\xi}
\newcommand{\sph}{\widehat{\mathbb{C}}}
\newcommand{\C}{\mathbb{C}} 
\newcommand{\R}{\mathbb{R}}
\newcommand{\Z}{\mathbb{Z}}
\newcommand{\N}{\mathbb{N}}
\begin{document}

\title[Equidistribution: algebraic units]{Equidistribution of the conjugates of algebraic units}

\author{Norm Levenberg \and Mayuresh Londhe}
%\thanks{Norm Levenberg is supported by Simons Foundation grant No. 707450}
\address{Department of Mathematics, Indiana University, Bloomington, Indiana 47405, USA}
\email{nlevenbe@iu.edu, mmlondhe@iu.edu}

\begin{abstract}
We prove an equidistribution result for the zeros of polynomials with integer coefficients 
and simple zeros. Specifically, we show that the normalized zero measures associated 
with a sequence of such polynomials, having small height relative to a certain compact set in the complex plane, 
converge to a canonical measure on the set. In particular, this result gives an equidistribution result for the conjugates 
of algebraic units, in the spirit of Bilu's work. Our approach involves lifting these polynomials to
polynomial mappings in two variables and proving an equidistribution result for the normalized 
zero measures in this setting.
\end{abstract}

\keywords{homogeneous, height, zeros of polynomial mappings, capacity}
\subjclass[2020]{Primary: 11R06, 31A15; Secondary: 11G50, 32U15}

\maketitle
%\vspace{-0.4cm}

\section{Introduction}\label{S:intro}
This article explores the asymptotic distribution of the zeros of certain polynomials in the complex plane $\C$,
as well as the zeros of certain polynomial mappings in $\C^2$. 
In particular, we investigate the asymptotic distribution of the conjugates of algebraic numbers.
We refer the reader to the recent works \cite{Serre19:dadvpef} and \cite{Smith:aicpd24}
for applications illustrating the relevance of studying the asymptotic distribution 
of the conjugates of algebraic numbers.
One of the initial results in this area was 
established by Bilu \cite{Bilu:limit97}, who showed that for any sequence of 
algebraic numbers $\{\alpha_n\}$ with small Weil height, 
the conjugates $\rm {Conj}(\alpha_n)$ of $\alpha_n$ are equidistributed with respect to the 
normalized arc-length measure on the unit circle. %in $\C$.
Using potential-theoretic methods, Rumely \cite{rumely:bilu99} extended this result to sets with 
logarithmic capacity equal to one.
We refer to \cite{BR:Esp06, CL:mees06, FRL:wq06, Yuan:blbav08} for other equidistribution results.
In this article, we present an equidistribution result for the conjugates of algebraic units. 
Recall that an {\em algebraic unit} is an algebraic integer whose reciprocal is also an algebraic integer. Equivalently, 
$\alpha$ is an algebraic unit if 
the minimal polynomial of $\alpha$ in $\Z[z]$ is monic and has constant term equal to $-1$ or $+1$.%
\smallskip

Let $K$ be a compact subset of $\C \setminus \{0\}$.
In \cite{Cantor:edtdsa80}, D. Cantor introduced a notion of 
capacity of $K$ with respect to $0$ and $\infty$, denoted here by $\Cancap(K)$.
Cantor proved that if every neighborhood of $K$ contains infinitely many complete sets
of conjugate algebraic units, then $\Cancap(K) \geq 1$.
We defer the precise definition of this capacity, as well as other potential-theoretic notions
in this introduction, to Section~\ref{S:prelim}. For now, we provide examples
of sets with $\Cancap(K)=1$ which are of particular interest.
For instance, if $K$ is the preimage of a unit circle or an interval of
length four under a rational map with complex coefficients of the form
\begin{equation}\label{E:monicrat}
\rat (z)= \frac{z^n + \dots \pm 1}{z^j},
\end{equation}
where $j < n$, then $\Cancap(K)=1$ (see Proposition~\ref{P:capacityone}).
In \cite{Robinson:units64}, Robinson provided examples of all real intervals $I$ such that
$\Cancap(I)=1$, although without using the language of capacity.
The interval $[3-2\sqrt2, 3+2\sqrt2]$ is one such example.
\smallskip

Given a polynomial $p(z)=a_dz^d+ \dots + a_0$, we define
the {\em height of $p$ relative to $K$} as
\[
\hgtone_K(p):=\frac{1}{d} \Big(\log \vert a_0a_d\vert + 
\sum_{p(x)=0}[\gr_K (x, 0) +\gr_K (x, \infty)] \Big),
\]
where $\gr_K (z, 0)$ and $\gr_K (z, \infty)$ denote the Green's function of $K$ with pole at $0$ and $\infty$, respectively.
For an algebraic number $\alpha$, we define $\hgtone_K(\alpha):= \hgtone_K(p)$, where $p$ is the minimal
polynomial of $\alpha$ in $\Z[z]$. 
The Green's functions $\gr_K (z, 0)$ and $\gr_K (z, \infty)$ are nonnegative subharmonic functions that
are equal to 0 at most points on $K$ and tend to $\infty$
as $z$ approaches their respective poles.
Loosely speaking, the Green's functions provide a way to quantify
the distance of a point from $K$.
\smallskip

The zeros of polynomials of small height equidistribute with respect to a particular measure
$\nu_K$ on $K$, which we describe after stating the first main result of this article.
\begin{theorem}\label{T:units_equi}
Let $K$ be a compact subset of $\C \setminus \{0\}$ with $\Cancap(K)=1$.
If $\{p_n\}$ is a sequence of polynomials in $\Z[z]$ with simple zeros such that
$\hgtone_K(p_n) \to 0$ and $\deg p_n \to \infty$ as $n \to \infty$,
then the zeros of $p_n$ are equidistributed with respect to $\nu_K$, i.e.,
\[
\nu_n:=\frac{1}{\deg p_n} \sum_{p_n(x)=0} \delta_{x} \to \nu_K \quad \text{as } n \to \infty
\]
in the weak* topology.
\end{theorem}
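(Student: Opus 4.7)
The plan is to lift the one-variable polynomials $p_n$ to homogeneous polynomial mappings of $\C^2$ and reduce the theorem to an equidistribution statement in two complex variables that is accessible via pluripotential theory. For $p_n(z) = a_{d_n} z^{d_n} + \cdots + a_0 \in \Z[z]$ with $a_0 a_{d_n} \neq 0$, I would form the homogenization $\tilde p_n(z,w) = a_{d_n} z^{d_n} + \cdots + a_0 w^{d_n}$; its zeros in the affine chart $\{w=1\}$ are exactly the zeros of $p_n$, while the coefficients $a_{d_n}$ and $a_0$ appear as axial values along $\{w=0\}$ and $\{z=0\}$. Pairing $\tilde p_n$ with a canonical auxiliary homogeneous polynomial of matching degree yields a polynomial mapping $\C^2 \to \C^2$ to which the $\C^2$-equidistribution machinery of the paper applies.

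Next, I would match $\hgtone_K(p_n)$ with a natural $\C^2$-height relative to a lifted compact set $\widehat K \subset \C^2$ obtained from $K$ via the Hopf fibration. Under this correspondence, $\gr_K(x,0) + \gr_K(x,\infty)$ becomes an evaluation of the pluricomplex Green function of $\widehat K$, and the axial term $\log|a_0 a_{d_n}|$ becomes a natural ``integrality at the two poles'' contribution. One then verifies that $\Cancap(K)=1$ corresponds to $\homcap(\widehat K)=1$, that $\nu_K$ is the push-forward of the $\C^2$-equilibrium measure restricted to the affine chart, and that $\hgtone_K(p_n)\to 0$ translates into the vanishing of the $\C^2$-height of $\tilde p_n$ relative to $\widehat K$. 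The heart of the argument is then a Bilu--Rumely style energy minimization in $\C^2$: extract a weak-$*$ subsequential limit $\mu$ of the normalized zero currents (using a tightness argument that crucially relies on $\Cancap(K)>0$ to control escape toward $0$ and $\infty$), apply lower semicontinuity of the pluricomplex energy, combine with the variational lower bound on $\hgtone_K$, and conclude that $\mu = \nu_K$. Pushing the convergence down from $\C^2$ to the affine chart $\{w=1\} \cong \C$ then gives $\nu_n \to \nu_K$.

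The main obstacle is controlling the mass near the two excluded poles $0$ and $\infty$ in the limit: a priori some fraction of the zeros of $p_n$ could drift toward either pole, producing a defect measure under push-forward, and the classical single-pole Bilu--Rumely arguments (which rely only on $\capa(K)=1$) do not directly prevent this. The hypothesis $\Cancap(K)=1$, together with the two-pole Green's function terms in $\hgtone_K$ and the integrality constraint $|a_0|,|a_{d_n}|\geq 1$ which forces $\log|a_0 a_{d_n}|\geq 0$, is designed precisely to rule this out, so that the vanishing of $\hgtone_K(p_n)$ simultaneously controls the coefficient contribution and the Green's function values at both poles. Establishing the sharp lower bound $\hgtone_K(p)\geq 0$ for $p \in \Z[z]$ in the two-pole setting, and then combining it with the pluripotential machinery so that the energy minimization closes up cleanly on $\C^2$, is where the bulk of the technical work lies.
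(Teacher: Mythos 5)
Your overall direction is the paper's: lift $p_n$ to a polynomial mapping of $\C^2$, lift $K$ to a circled compact $\widehat K$, match heights and capacities, prove equidistribution in $\C^2$, and push forward along $\pi(z_1,z_2)=z_1/z_2$. However, the proposal leaves unspecified exactly the two or three points on which the argument actually hinges, and at one place it points in a direction that is unlikely to close.

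First, the ``canonical auxiliary homogeneous polynomial of matching degree'' paired with $\tilde p_n$ cannot be arbitrary; it must be $z_1^{m_n}z_2^{d_n-m_n}-1$ where $m_n/d_n \to s_1$ and $(s_1,s_2)$ is the probability vector making the two weighted Robin sums
$s_1\lim_{z\to 0}[\gr_K(z,0)+\log|z|]+s_2\gr_K(0,\infty)$ and $s_1\gr_K(\infty,0)+s_2\lim_{z\to\infty}[\gr_K(z,\infty)-\log|z|]$
agree. The constant term $-1$ forces every zero $\zero=(x_1,x_2)$ of $F_n$ to satisfy $x_1^{m_n}x_2^{d_n-m_n}=1$, i.e.\ to lie near the level set $\{|z_1|^{s_1}|z_2|^{s_2}=1\}$, which is exactly where $\mu_{\widehat K}$ is supported; and it is what makes the resultant of $F_n$ computable in terms of $|a_0|^{m_n}|a_{d_n}|^{d_n-m_n}$ so that $\hgtone_K(p_n)\to 0$ translates into $\hgttwo_{\widehat K}(F_n)\to 0$. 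Your proposal never introduces $(s_1,s_2)$ nor the set $\widehat K=\{f_K\leq 0\}$ with $f_K$ the weighted combination of Green's functions, so the lift of $K$ and the target measure $\nu_K=s_1\nu_K^0+s_2\nu_K^\infty$ are underdetermined. Relatedly, the case in which no such $(s_1,s_2)$ exists (when $0$ and $\infty$ lie in different components and the two diagonal Robin entries have opposite sign) needs to be disposed of separately by showing the hypotheses of the theorem are vacuous.

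Second, the arithmetic input is misplaced. You propose to prove a sharp one-variable lower bound $\hgtone_K(p)\geq 0$ and feed it into the energy argument, but what actually makes the two-variable energy estimate close is a lower bound for the pairwise wedge products of a transversal set of zeros: if $\zero_1,\dots,\zero_d$ are pairwise nonparallel zeros of a generic integer $F$ of the form $(F_1+a_1,F_2+a_2)$, then $\prod_{i\neq k}|\zero_i\wedge\zero_k|\geq|\res(F)|^{(2-2d)/d}$, ultimately because the discriminant of $F_1(x,1)$ is a nonzero integer. This is the $\C^2$ analogue of ``the discriminant of a separable integer polynomial is at least $1$,'' and without it the step from $\hgttwo_{\widehat K}(F_n)\to 0$ to $\I(\mu)\leq 0$ for a subsequential limit $\mu$ does not go through.

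Third, you cannot conclude from $\I(\mu)=0$ that $\mu=\mu_{\widehat K}$ without an extra invariance: homogeneous-energy minimizers on a circled compact set need not be unique, only the $S^1$-invariant one is. One must check that any weak-$*$ limit of the normalized zero measures is $S^1$-invariant (this follows from invariance of the zero set of $F_n$ under multiplication by $d_n$-th roots of unity), and then invoke DeMarco's uniqueness result. That step is missing from the proposal.
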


\noindent We recall that $\nu_n \to \nu_K$ as $n \to \infty$ in the weak* topology if for any bounded continuous function
$f$ on $\C$, we have 
$$\lim_{n \to \infty} \int_{\C} f(z) \, d\nu_n(z) = \int_{\C} f(z) \, d\nu_K(z).$$
We now describe the measure $\nu_K$. Given a compact set $K$ with $\Cancap(K)=1$, let $s_1 \geq 0$ and $s_2 \geq 0$ with $s_1+s_2=1$ satisfy
\[
s_1 \lim_{z \to 0} [\gr_K(z, 0) + \log |z|] +s_2 \gr_K(0, \infty)=0,
\]
\[
s_1\gr_K(\infty, 0)+ s_2 \lim_{z \to \infty} [ \gr_K(z, \infty) - \log |z|]=0.
\]
For instance, for a set that is the preimage of of a unit circle or an interval of
length four under a rational map of the form \eqref{E:monicrat}, we can take
$s_1=j/n$ and $s_2= (n-j)/n$. Then the measure $\nu_K$ that appears in the statement of 
Theorem~\ref{T:units_equi} is given by
\[
\nu_K:= s_1 \nu_K^0+s_2 \nu_K^{\infty},
\]
where $\nu_K^0$
is the harmonic measure of $K$ with respect to $0$ and $\nu_K^{\infty}$ is the equilibrium measure for $K$.
The measure $\nu_K^0$ is the push-forward of the equilibrium measure for $1/K:=\{1/z: z \in K\}$
under the map $z\to 1/z$. If $K=[a, b]$ with $b > a >0$, then
\begin{equation} \label{forserre} 
d\nu_{K}^{0}(t)=\frac{\sqrt{ab}}{\pi t \sqrt{(t - a)(b - t)}} \, dt
\quad \text{and} \quad
d \nu_{K}^{\infty}(t)=\frac{dt}{\pi \sqrt{(t - a)(b - t)}}.
\end{equation}

In particular, Theorem~\ref{T:units_equi} gives an equidistribution result for 
any sequence of 
algebraic units $\{\alpha_n\}$ such that the conjugates of $\alpha_n$ eventually belong to any 
neighborhood of $K$.
Theorem~\ref{T:units_equi} also provides new examples of {\it arithmetic probability measures} as discussed in \cite{OrlSar:ldcai23}.
Observe that the logarithmic capacity of
the interval $[3-2\sqrt2, 3+2\sqrt2]$ is greater than 1, so the equidistribution result in 
\cite{rumely:bilu99} does not apply here. We mention that for many compact sets $K$ with $\Cancap(K)=1$, there do exist sequences of polynomials
satisfying the hypotheses of Theorem~\ref{T:units_equi}; for example, if $K$ is symmetric about the real line,
and 0 and $\infty$ are in the same component of the complement of $K$ such sequences exist. 
We refer to \cite[page 199]{Cantor:edtdsa80} for more on this.

\medskip

{\noindent{\bf Schur--Siegel--Smyth trace problem:}} Recently, A. Smith solved the Schur--Siegel--Smyth trace problem 
\cite{Smith:aicpd24}. He showed there exists a sequence $\{\alpha_n\}$ of totally positive algebraic integers
($\alpha_n$ is a real algebraic integer with $\rm {Conj}(\alpha_n)\subset [0,\infty)$) such that
\[
\liminf_{n \to \infty} {\rm{Trace}} (\alpha_n): =\liminf_{n \to \infty} \sum_{x \in {\rm {Conj}}(\alpha_n)} \frac{ x }{\deg \alpha_n} \approx 1.898.
\]
For this, Smith used a measure constructed by Serre (\cite{AP}, Appendix B), which Smith showed to be a limiting measure of a sequence of algebraic integers.
We demonstrate here how our Theorem~\ref{T:units_equi} sheds more light on their technique.
\smallskip

As mentioned earlier, Robinson \cite{Robinson:units64} gave
examples of all real intervals $I$ such that
$\Cancap(I)=1$.
Let $a$, $b$ and $\tau$ be positive real numbers satisfying 
\[
\log\frac{b-a}{4}= \tau \log \frac{\sqrt{b} + \sqrt{a} }{\sqrt{b} - \sqrt{a} }
 {\text { and }}
\log\frac{b-a}{4ab}=\frac{1}{\tau} \log \frac{\sqrt{b} + \sqrt{a} }{\sqrt{b} - \sqrt{a}}.
\]
The above equations define $a$ and $b$ as continuous, increasing functions of $\tau$ for $\tau > 0$. 
Specifically, $a$ increases from 0 to ${1}/{4}$, and $b$ increases from 4 to $\infty$ as $\tau$ goes from 0 to $\infty$.
For $0 < \tau < \infty$, let $J_\tau$ represent the interval $[a, b]$ determined by these equations. 
It turns out that, for any $0 < \tau < \infty$, we have $\Cancap(J_\tau)=1$.
%moreover, the measure constructed by Serre is the measure $\nu_{J_\tau}$ on $J_\tau$ for some particular $\tau$.
\smallskip

For instance, if $\tau=1/4$, there exists a rational function with real coefficients
$$
\rat(z)= \frac{z^5+ \dots \pm1}{z}
$$
such that $J_{1/4}=\rat^{-1}[-2, 2]$. In this case, the end points of $J:=J_{1/4}$ are given by
\[
\sqrt a=w^5-w^{-3} {\text { and }} \sqrt b=w^5+w^{-3},
\]
where $w$ is the unique real root with $w>1$ of the polynomial $w^{25}-w^9-1$
(see \cite[Section 6]{Robinson:units64}). 
A simple numerical calculation shows that $w \approx 1.03499$. Thus
%\[a  \approx 0.08160168197148374 {\text { and }} b  \approx 4.366418882371485. \]
\[a  \approx 0.08160 {\text { and }} b  \approx 4.36641.\]
Since $\Cancap(J)=1$, there exists a sequence $\{\alpha_n\}$ of algebraic units
such that the conjugates of $\alpha_n$ eventually belong to any 
neighborhood of $J$. Hence $\hgtone_J(\alpha_n)\to 0$ and Theorem~\ref{T:units_equi} 
implies that these conjugates equidistribute with respect to $\nu_J$. By definition of weak* convergence, we have
$
{\rm{Trace}} (\alpha_n)\to \int t \, d\nu_{J}(t)
$
as $n \to \infty$. From Proposition \ref{P:capacityone}, it follows that $\nu_{J}= (1/5) \nu_J^0 + (4/5) \nu_J^{\infty}$. 
%A standard calculation using (\ref{forserre}) gives 
Using (\ref{forserre}), we get
\[
 \int t \, d\nu_{J}(t)= \frac{1}{5} \sqrt{ab}+ \frac{4}{5} \cdot \frac{a+b}{2} \approx 1.898.
\]
\begin{corollary}
%Therefore, in fact, 
There exists a sequence $\{\alpha_n\}$ of totally positive algebraic \textbf{\textit{units}} such that
\[
\liminf_{n \to \infty} {\rm{Trace}} (\alpha_n)=\liminf_{n \to \infty} \sum_{x \in {\rm {Conj}}(\alpha_n)} \frac{ x }{\deg \alpha_n} \approx 1.898.
\]
\end{corollary}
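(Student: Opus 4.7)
The strategy is to combine the existence of a suitable sequence of algebraic units whose conjugates cluster near $J := J_{1/4}$ with the equidistribution result of Theorem~\ref{T:units_equi}, and then to evaluate the resulting trace integral explicitly.

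The first step is to produce a sequence $\{\alpha_n\}$ of totally positive algebraic units whose complete sets of conjugates lie in arbitrarily small neighborhoods of $J$. Since $\Cancap(J)=1$ and $J$ is a real interval contained in $(0,\infty)$, the existence results of Robinson~\cite{Robinson:units64} and Cantor~\cite{Cantor:edtdsa80} --- implemented via pullbacks of Chebyshev-like polynomials through the rational map $\rat(z) = (z^5 + \cdots \pm 1)/z$ with $\rat^{-1}([-2,2]) = J$ --- supply such a sequence, and the construction can be arranged so that the conjugates are totally real. Taking the neighborhoods small enough forces the conjugates into $(0,\infty)$, so each $\alpha_n$ is totally positive.

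Next, I verify that $\hgtone_J(\alpha_n) \to 0$ and apply the main theorem. Since $\alpha_n$ is a unit, its minimal polynomial $p_n \in \Z[z]$ is monic with constant term $\pm 1$, so $\log|a_0 a_d| = 0$ in the definition of $\hgtone_J$. The Green's functions $\gr_J(\cdot,0)$ and $\gr_J(\cdot,\infty)$ vanish on $J$ and are continuous off their poles, hence uniformly small on any sufficiently small neighborhood of $J$. Because units have irreducible (hence simple-rooted) minimal polynomials, Theorem~\ref{T:units_equi} applies, and the normalized root measures $\nu_n$ converge weak* to $\nu_J$. As the $\nu_n$ are eventually supported in a common compact set, integrating the function $t \mapsto t$ against $\nu_n$ converges, giving ${\rm Trace}(\alpha_n) \to \int t\,d\nu_J(t)$.

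Finally, Proposition~\ref{P:capacityone} yields $\nu_J = (1/5)\nu_J^0 + (4/5)\nu_J^\infty$, and the explicit densities in~\eqref{forserre} produce
\[
\int t\,d\nu_J(t) \;=\; \frac{1}{5}\sqrt{ab} + \frac{4}{5}\cdot\frac{a+b}{2} \;\approx\; 1.898,
\]
with $a \approx 0.08160$ and $b \approx 4.36641$ the endpoints of $J$. The principal obstacle is the first step --- exhibiting a sequence of \emph{units} with \emph{totally real} conjugates that accumulate on $J$. This is precisely where the hypothesis $\Cancap(J) = 1$ and the special form of $\rat$ come in; once this sequence is in hand, everything else is a direct application of Theorem~\ref{T:units_equi} followed by an elementary calculation.
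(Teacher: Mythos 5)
Your proposal is correct and follows essentially the same route as the paper: invoke Robinson/Cantor to produce a sequence of totally positive algebraic units accumulating on $J=J_{1/4}$, observe that $\hgtone_J(\alpha_n)\to 0$ (since the minimal polynomial of a unit is monic with constant term $\pm 1$ and the Green's functions vanish on $J$), apply Theorem~\ref{T:units_equi} to get weak* convergence of the root measures to $\nu_J=\tfrac15\nu_J^0+\tfrac45\nu_J^\infty$, and compute $\int t\,d\nu_J(t)$ from the explicit densities in~\eqref{forserre}. You actually fill in two points the paper leaves implicit: the remark that one must pass from weak* convergence against bounded continuous functions to convergence of $\int t\,d\nu_n$ via uniform compact support, and the explicit note that irreducibility of the minimal polynomial guarantees simple zeros. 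These are worth stating.

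One small caution on your existence step: being in an arbitrarily small \emph{complex} neighborhood of the real interval $J$ does not by itself force the conjugates to be real, so the sentence ``taking the neighborhoods small enough forces the conjugates into $(0,\infty)$'' only works after you have first secured total reality. You do assert the conjugates can be arranged to be totally real (which is what Robinson's construction gives: units with all conjugates inside the real interval), so the argument is sound, but the logical order should be ``Robinson gives units with all conjugates in a slightly larger real subinterval of $(0,\infty)$; hence they are totally positive,'' rather than deriving positivity from smallness of a complex neighborhood.
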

\noindent It appears that the minimal value of $\liminf_{n \to \infty} {\rm{Trace}} (\alpha_n)$ in the Schur--Siegel--Smyth 
trace problem might be attained for a sequence of algebraic units.

\begin{question} 
What is the minimal value of $\liminf_{n \to \infty} {\rm{Trace}} (\alpha_n)$ over sequences of algebraic units?
Moreover, does there exist a compact set $K \subset [0, \infty)$ with $\Cancap(K)=1$ for which this minimal value is attained?
\end{question}

To establish Theorem~\ref{T:units_equi}, we examine the asymptotic distribution of zeros of certain polynomial 
mappings in two variables.
Let $\{p_n\}$ be a sequence of polynomials that satisfies the hypotheses of Theorem~\ref{T:units_equi}
for some compact set $K$ in $\C$.
We consider the polynomial mappings $F_n : \mathbb{C}^2 \to \mathbb{C}^2$ defined using
polynomials $p_n$ as follows:
\[
F_n(z_1, z_2) = \Big(z_2^{\deg p_n}\,p_n \Big(\frac{z_1}{z_2} \Big), \ {z_1}^{m_n} z_2^{\deg p_n- m_n}-1\Big),
\]
where $m_n \in \N$ depends on the parameters $s_1$ and $s_2$ as discussed earlier corresponding to the set $K$.
Then we lift the set $K$ to $\C^2$ and investigate the equidistribution of the zeros of $F_n$ on this lift of $K$.
Indeed, we derive an equidistribution result for more general polynomial mappings and sets in $\C^2$,
which we now proceed to explain.
\smallskip

Let $\nice$ be a compact subset of $\C^2$. 
We denote by $\homcap(\nice)$ the homogeneous capacity of $\nice$
introduced by DeMarco \cite{DeM:dorm03}.
For sets with $\homcap(\nice) >0$,
let $\mu_{\nice}$ denote the Monge-Amp\`ere measure of $\nice$.
It turns out that if $\nice$ is circled\,---\,i.e., if $(z_1, z_2) \in \nice$
implies $(e^{it} z_1, e^{it} z_2) \in \nice$ for any real $t$\,---\,then $\mu_{\nice}$ is the unique $S^1$-invariant measure
that minimizes homogeneous energy for $\nice$
(see Sections~\ref{S:homopotential} and~\ref{S:nonregular} for more details).
This fact will play a crucial role in the proof of our next main result.
Consider a polynomial mapping $F: \C^2 \to \C^2$ given by
\begin{equation}\label{E:homconst}
F(z_1,z_2)= (F_1(z_1,z_2)+a_1, F_2(z_1,z_2)+a_2),
\end{equation}
where $F_1$ and $F_2$ are homogeneous polynomials of the same degree $d$.
We call $d$ the degree of $F$ and denote it by $\deg F$.
Additionally, such a mapping $F$ has $d^2$ zeros, counted with multiplicity.
We define the {\em height of $F$ relative to $\nice$} as
\[
\hgttwo_{\nice}(F):=\frac{1}{(\deg F)^2} \Big ( \log \vert \res (F) \vert 
+ \sum_{\zero \in F^{-1}(0,0)} \robin_{\nice}^+ (\zero) \Big ),
\]
where $\res (F)$ is the resultant of $F_1$ and $F_2$, as defined in Section~\ref{SS:DiscRes}
and $\robin_{\nice}^+ (\zero)$ is an analogue of the Green's function in this setting.
\smallskip

Observe that for $F$ as in \eqref{E:homconst}, if $\zero \in \C^2$ is a zero of $F$, then
$c\,\zero$ is also a zero of $F$ for any $d^{th}$ root of unity $c$.
We say that $F$ is {\em generic} if there exist $d$ zeros of $F$,
say $\zero_1, \dots , \zero_d \in \C^2$, such that 
$\zero_i$ is {\bf not} a scalar multiple of $\zero_k$ for $i \neq k$.
This condition is analogous to the condition that a univariate polynomial
$p(z)$ has simple zeros. For such mappings, we prove the following equidistribution result:

\begin{theorem}\label{T:homo_equi}
Let $\nice$ be a compact, circled set in $\C^2$ with $\homcap(\nice) =1$.
Let $\{F_n\}$ be a sequence of generic mappings with integer coefficients of the form
\eqref{E:homconst} such that
$\hgttwo_{\nice}(F_n) \to 0$ and $\deg F_n \to \infty$ as $n \to \infty$. Then
%the zeros of $F_n$ are equidistributed with respect to
\[
\mu_n:=\frac{1}{(\deg F_n)^2} \sum_{F_n(\zero)=(0,0)} \delta_{\zero} \to \mu_{\nice} \quad \text{as } n \to \infty
\]
in the weak* topology. 
\end{theorem}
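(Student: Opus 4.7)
The plan is to follow the spirit of Bilu's equidistribution argument, adapted to DeMarco's homogeneous two-variable framework: extract a weak-$*$ subsequential limit $\mu$ of $\{\mu_n\}$, show that $\mu$ is supported on $\nice$, is $S^1$-invariant, and minimizes the homogeneous energy $\I$, and then invoke the uniqueness of the $S^1$-invariant Monge--Amp\`ere measure mentioned in the introduction to conclude $\mu = \mu_{\nice}$.

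For compactness and support, I would first observe that the integrality of the coefficients of $F_n$ forces $|\res(F_n)| \geq 1$, hence $\log|\res(F_n)| \geq 0$. Since $\robin_{\nice}^+ \geq 0$ and $\hgttwo_{\nice}(F_n) \to 0$, both terms in the height must go to zero; in particular,
\[
\frac{1}{(\deg F_n)^2} \sum_{\zero \in F_n^{-1}(0,0)} \robin_{\nice}^+ (\zero) \longrightarrow 0.
\]
Because $\robin_{\nice}^+$ is nonnegative and blows up at the origin and near infinity relative to $\nice$, the sequence $\{\mu_n\}$ is tight on $\C^2 \setminus \{(0,0)\}$, and extracting a weak-$*$ subsequential limit $\mu$, lower semicontinuity of $\robin_{\nice}^+$ yields $\int \robin_{\nice}^+ \, d\mu = 0$, so $\mu$ is supported on $\nice$ (up to a set of homogeneous capacity zero). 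For $S^1$-invariance, homogeneity of $F_{n,1}, F_{n,2}$ gives $F_n(c\,\zero) = F_n(\zero)$ for every $d_n$-th root of unity $c$, so $\mu_n$ is invariant under multiplication by $d_n$-th roots of unity; as $d_n \to \infty$ these discrete actions become dense in $S^1$ and the limit $\mu$ is $S^1$-invariant.

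The crux of the argument is an upper bound on $\I(\mu_n)$. Using a product expansion of $\res(F_{n,1}, F_{n,2})$ in terms of the zeros of $F_n$\,---\,with the genericity assumption guaranteeing that representatives of the $d_n$ distinct $\C^*$-orbits of zeros have nonvanishing wedge products\,---\,one expresses $\frac{1}{(\deg F_n)^2} \log |\res(F_n)|$ as a pair-interaction energy of $\mu_n$ for the homogeneous kernel $-\log|\zero \wedge \eta|$, modulo boundary contributions captured by $\robin_{\nice}^+$. The integrality bound $\log|\res(F_n)| \geq 0$, together with the vanishing of the averaged $\robin_{\nice}^+$ values, yields via a truncation and lower-semicontinuity argument the limit inequality $\I(\mu) \leq \I(\mu_{\nice}) = -\log \homcap(\nice) = 0$. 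Combined with $S^1$-invariance and support in $\nice$, the uniqueness of the $S^1$-invariant minimizer then gives $\mu = \mu_{\nice}$; since every subsequential limit is $\mu_{\nice}$, the full sequence converges weak-$*$ to $\mu_{\nice}$.

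The principal obstacle I anticipate lies in this energy step: one must produce a clean expansion of $\frac{1}{(\deg F_n)^2} \log|\res(F_n)|$ in terms of the pair-energy of the $d_n^2$ zeros of $F_n$, while carefully handling (i) the additive shift by $(a_1,a_2)$, so that the zeros of $F_n$ are not literal roots of the homogeneous parts $F_{n,1}, F_{n,2}$, (ii) the divergent \emph{self-energy} contributions along each $\C^*$-orbit of zeros, which must be isolated or absorbed into a truncation error, and (iii) the conversion of the global arithmetic bound $|\res(F_n)| \geq 1$ into a pointwise lower-energy inequality that survives passage to the weak-$*$ limit.
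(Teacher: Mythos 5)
Your overall architecture matches the paper: tightness from $\robin_\nice^+$, $S^1$-invariance from density of roots of unity, a lower bound on the pair-energy of the zeros that forces $\I(\mu)\leq 0$, and then uniqueness of the $S^1$-invariant minimizer. You also correctly flag the ``energy step'' as the crux. But the mechanism you propose for it\,---\,expressing $\frac{1}{d_n^2}\log|\res(F_n)|$ directly as a pair-interaction energy of the zeros and extracting the bound from $|\res(F_n)|\geq 1$\,---\,won't close. The resultant $\res(F)=\prod_{i,j}\mathbf a_i\wedge\mathbf b_j$ is a cross-energy between the linear factors of $F_1$ and $F_2$, while the zeros of $F$ are rescaled $\mathbf a_i$'s; the two are related only indirectly. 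In fact in the paper's key identity the resultant appears to a \emph{negative} power: if $\zero_1,\dots,\zero_d$ are pairwise-independent zeros, then
\[
\prod_{i\neq k}|\zero_i\wedge\zero_k| \;=\; |\res(F)|^{\frac{2-2d}{d}}\,|\discr(p)|, \qquad p(x):=F_1(x,1),
\]
so the arithmetic input driving the lower bound is $|\discr(p)|\geq 1$ (a nonzero integer), \emph{not} $|\res(F)|\geq 1$. The latter is only used to show $\frac{1}{d_n^2}\log|\res(F_n)|\to 0$, so that the negative resultant power washes out in the limit. Without the discriminant identity you would not obtain a usable lower bound from $|\res(F_n)|\geq 1$ alone; that quantity acts against you.

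Two more items you flag as concerns but leave open are resolved concretely in the paper. The additive shift $(a_1,a_2)$: after normalizing $\gcd(a_1,a_2)$, one composes with a unimodular integer map $\Phi\in GL_2(\Z)$ sending $(-a_1,-a_2)\mapsto(0,1)$; since $|\res(\Phi)|=1$, this preserves the resultant and the zero set and reduces to $(a_1,a_2)=(0,-1)$, where the zeros of $F$ are explicitly $c\,\mathbf a_i/(\prod_j\mathbf a_i\wedge\mathbf b_j)^{1/d}$ over $d$-th roots of unity $c$. The ``self-energy'' along a $\C^*$-orbit: the argument never attempts to sum $-\log|\zero\wedge\eta|$ over all $d^2\times d^2$ pairs; instead one picks a single representative per orbit (genericity gives $d$ pairwise-independent representatives), bounds the restricted sum by the identity above, and uses the truncated kernel $h_M=\min\{M,-\log|\cdot\wedge\cdot|\}$ together with $j_n/d_n\to 1$ and the $\robin_\nice^+$-control on zeros outside a large ball to pass to the limit. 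Supplying the discriminant/resultant identity and these two devices is exactly what would make your outline a proof.
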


Theorem~\ref{T:homo_equi} appears to be a new result even for compact sets like polydisks and balls. 
The homogeneous capacity of the polydisk $\{|z_1| \leq r_1\} \times \{|z_2| \leq r_2\}$ is $r_1 r_2$, and $\mu_{\nice}$ 
is the product of normalized arc-length measures on $\{|z_1| = r_1\}$ and $\{|z_2| = r_2\}$. For 
the Euclidean ball of radius $r$ centered at the origin, the homogeneous capacity is $r^2 e^{-1/2}$, and $\mu_{\nice}$ is 
normalized surface area measure on its boundary. The assumption in Theorem~\ref{T:homo_equi} that the mappings $F_n$ are generic is essential. 
To see this, consider the example $F_n(z_1, z_2) = (z_1^n, z_2^n - 1)$ with 
$\nice = \{|z_1| \leq 1\} \times \{|z_2| \leq 1\}$.
In this case, $\hgttwo_{\nice}(F_n) \to 0$ and $\deg F_n \to \infty$ as $n \to \infty$, but $\mu_n$ does not converge to $\mu_{\nice}$.
\smallskip

We briefly mention the idea of the proof of Theorem~\ref{T:homo_equi}.
For $\mathbf z = (z_1, z_2)$ and $\mathbf w = (w_1, w_2)$ in $\C^2$, put 
$\mathbf z \wedge \mathbf w := z_1w_2 - z_2 w_1.$ Observe that
$\mathbf z \wedge \mathbf w \neq 0$ if and only if $\mathbf z$
and $\mathbf w$ are linearly independent.
%Let $F$ be a polynomial mapping with integer coefficients of the form \eqref{E:homconst}.
If $F$ of degree $d$ has integer coefficients, and
if $\zero_1, \dots , \zero_d \in \C^2$
are any $d$ zeros of $F$ such that $\zero_i \wedge \zero_k \neq 0$ for $i \neq k$,
then we show that
\[
\prod_{i \neq k} \vert \zero_i \wedge \zero_k \vert \geq \vert \res (F) \vert^{\frac{2-2d}{d}}.
\]
This is a key ingredient in the proof of Theorem~\ref{T:homo_equi}.
%Now, let $\{F_n\}$ be a sequence of mappings as in Theorem~\ref{T:homo_equi}. 
Assume that a subsequence of $\{\mu_n\}$ in Theorem~\ref{T:homo_equi} converges to $\mu$.
Owing to the above inequality,
we show that the homogeneous energy of $\mu$ is less than or equal to $0$. However, since
$\homcap(\nice)=1$, the homogeneous energy of $\mu_\nice$ is $0$ from which it follows that $\mu= \mu_\nice$.

\medskip

{\noindent{\bf Outline of the paper:}} For simplicity of exposition, 
we prove Theorem~\ref{T:units_equi} under the assumption that $K$ is regular, meaning the Green's functions 
are continuous, in Section~\ref{S:units_equi}. To do so, it suffices to prove Theorem~\ref{T:homo_equi} for the 
case where $\nice$ is compact, circled and pseudoconvex, which we establish in Section~\ref{S:homo_equi}. 
The necessary potential-theoretic and
pluripotential-theoretic notions are discussed in Sections~\ref{S:prelim} and~\ref{S:homopotential}. 
Finally, in a brief appendix, Section~\ref{S:nonregular}, we discuss the minor modifications necessary 
to prove Theorems~\ref{T:units_equi}~and~\ref{T:homo_equi} in full generality.

\section{Potential theory in the complex plane}\label{S:prelim}
To define the capacity of a subset of $\C \setminus \{0\}$ 
with respect to 0 and $\infty$, we need a notion of the value of a matrix.
\subsection{The value of a matrix}\label{S:value}
%To define the capacity of a subset of $\C \setminus \{0\}$ 
%relative to 0 and $\infty$, we need a notion of the value of a matrix. 
Let
\[
\Gamma = \begin{bmatrix}
\gamma_{11} & \gamma_{12} \\
\gamma_{21} & \gamma_{22}
\end{bmatrix}
\]
be a symmetric $2 \times 2$ matrix with real entries, where 
the off-diagonal elements are non-negative.
Define $ \mathscr{P} $ as the set of $2$-dimensional vectors 
$ \mathbf{s} = (s_1, s_2)$ such that $s_1+s_2= 1$ and 
$ s_i \geq 0 $ for $i=1, 2$.
The {\emph{value}} of $ \Gamma$ is defined as
\begin{equation}\label{eq:def}
\text{val}(\Gamma) := \max_{\mathbf{s} \in \mathscr{P}} [\min \{\gamma_{11}s_1 + \gamma_{12}s_2, \gamma_{21}s_1 + \gamma_{22}s_2\}].
\end{equation}
By the {fundamental theorem of game theory}, we have
\begin{equation}\label{eq:fun_def}
\text{val}(\Gamma) = \min_{\mathbf{s} \in \mathscr{P}}  [\max \{\gamma_{11}s_1 + \gamma_{12}s_2, \gamma_{21}s_1 + \gamma_{22}s_2\}].
\end{equation}
%It follows from the compactness of $\mathscr{P}$ that there always exist a vector 
%$\mathbf{s} \in \mathscr{P}$ giving the optimal value.
If there exists a vector $\mathbf{s} \in \mathscr{P}$ such that
$\gamma_{11}s_1 + \gamma_{12}s_2 = \gamma_{21}s_1 + \gamma_{22}s_2$,
say equal to $v$, then by \eqref{eq:def}, we have
$\text{val}(\Gamma) \geq v$.
On the other hand, by \eqref{eq:fun_def}, we also have $\text{val}(\Gamma) \leq v$. 
Therefore, it follows that $\text{val}(\Gamma) = v$.
\smallskip

It follows from Frobenius's theorem that if 
$\text{val}(\Gamma) \neq 0$ or
$\gamma_{12}= \gamma_{21} >0$,
there exists a vector
$\mathbf{s} \in \mathscr{P}$ with $s_1, s_2 > 0$ such that 
$\gamma_{11}s_1 + \gamma_{12}s_2 = \gamma_{21}s_1 + \gamma_{22}s_2$
(see \cite[Theorem 1.4.1]{Cantor:edtdsa80}, \cite[Theorem 5.1.6]{rumely:capacitytheory89}).
If $\text{val}(\Gamma)= 0$ and $\gamma_{12}= \gamma_{21} =0$, we have the following cases:
\begin{enumerate}
\item Both $\gamma_{11}$ and $\gamma_{22}$ are zero:
For any choice of probability vector $\mathbf{s} \in \mathscr{P}$, we have 
$\gamma_{11}s_1 + \gamma_{12}s_2 = \gamma_{21}s_1 + \gamma_{22}s_2=0$.
\item One of $\gamma_{11}$ and $\gamma_{22}$ is zero and the other is non-zero:
Without loss of generality, assume $\gamma_{11}=0$ and $\gamma_{22}\neq 0$.
Then, for the vector $s_1=1$ and $s_2=0$, we have 
$\gamma_{11}s_1 + \gamma_{12}s_2 = \gamma_{21}s_1 + \gamma_{22}s_2=0$.
\item One of $\gamma_{11}$ and $\gamma_{22}$ is positive and the other is negative:
In this case, no vector $\mathbf{s} \in \mathscr{P}$ exists that satisfies 
$\gamma_{11}s_1 + \gamma_{12}s_2 = \gamma_{21}s_1 + \gamma_{22}s_2$.
For instance, consider the matrix
\[
\Gamma = \begin{bmatrix}
1 & 0 \\
0 & -1
\end{bmatrix}.
\]
\end{enumerate}
Therefore, there exists a vector $\mathbf{s} \in \mathscr{P}$ such that 
$\gamma_{11}s_1 + \gamma_{12}s_2 = \gamma_{21}s_1 + \gamma_{22}s_2$,
except in the case (3) described above.
%{\red 1. Confused about if we should talk about $\text{val}(\Gamma) \neq 0$ case?}

\subsection{The capacity with respect to 0 and $\infty$}\label{S:Cantor}
We begin by recalling some basic notions in potential theory. We refer
the reader to \cite{ransford:ptCp95} or \cite{SaffTotik:lpwef97} for more detailed discussion.
Given a probability measure $\nu$ on $\C$ with compact support,
$$U^{\nu}(z):= \int_{\C} \log \frac{1}{|z-w|}\,d\nu(w)$$
is the  {\em logarithmic potential} of $\nu$, and
$$I(\nu):=\int_{\C} \int_{\C} \log \frac{1}{|z-w|}\, d\nu(w)d\nu(z)$$
is the {\em logarithmic energy} of $\nu$.
For $K$ a compact subset of $\C$, let $\mathcal M(K)$ denote 
the set of Borel probability measures on $K$. The {\em logarithmic capacity} of $K$ is
$$\capa (K):=\exp (-\inf_{\nu \in \mathcal M(K)} I(\nu)).$$
%where the infimum is taken over all ${\nu \in \mathcal M(K)}$.
Recall that $E \subset \C$ is {\it polar} if $E\subset \{u= -\infty\}$ for some $u\not \equiv -\infty$ 
subharmonic on a neighborhood of $E$. 
If ${\rm cap} (K)$ is positive, which occurs precisely 
when $K$ is not polar, 
there is a unique $\nu_K^{\infty} \in \mathcal M(K)$ that minimizes the energy.
The measure $\nu_K^{\infty}$ is called the {\em equilibrium measure} for $K$.
The measure $\nu_K^{\infty}$ puts no mass on polar sets. 
The function 
\[
\gr_K(z,\infty):=I(\nu_K^{\infty})-U^{\nu_K^{\infty}}(z),
\]
which is subharmonic on $\C$, is the {\em Green's function of $K$ with pole at $\infty$}. 
The potential function 
$U^{\nu_K^{\infty}}=I(\nu_K^{\infty})$ quasi-everywhere (q.e.) on $K$, 
i.e., on $K\setminus E$ where $E$ is a polar set (Frostman's theorem). Hence $\gr_K(z,\infty)=0$ q.e. on $K$. 
We say $K$ is {\it regular} if $\gr_K(z,\infty)$ is continuous; in this case, $E=\emptyset$.  
Since $-U^{\nu_K^{\infty}}(z)= \log |z| + o(1)$ as $z \to \infty$, we have
\begin{equation}\label{E:greeninfty}
I(\nu_K^{\infty})= \lim_{z \to \infty} [ \gr_K(z, \infty) - \log |z|].
\end{equation}
Let $K$ be a non-polar compact subset of $\sph \setminus \{0, \infty\}$. 
%Following \cite{Cantor:edtdsa80}, given a component $D$ of $\sph \setminus K$, we let 
%$\tilde \gr_D(z,w)$ be the unique function defined for $z,w\in D$ with $z\not = w$ which is positive 
%and harmonic in each variable;  satisfies $\tilde \gr_D(z,w)=\tilde \gr_D(w,z)$; 
%$\lim_{z\to z_0} \tilde \gr_D(z,w)=0$ q.e. $z_0\in \partial D$; and for each 
%$w\not =\infty$ in $D$, $z\to \tilde \gr_D(z,w)+\log|z-w|$ has a harmonic extension to a neighborhood of 
%$w$ while $z \to \tilde \gr_D(z,\infty)-\log|z|$ has a finite limit as $z\to \infty$. 
%Then we define the generalized Green's function $\gr_K$ of $K$ for all $z,w\in \C$ by the formula 
%$$\gr_K(z,w)= \tilde \gr_D(z,w) \ \hbox{if} \ z,w \ \hbox{are in the same component} \ D \ \hbox{of} \ \sph \setminus K$$
%and otherwise $\gr_K(z,w)=0$. 
We define $\gr_K(z, 0)$, the {\em Green's function of $K$ with pole at $0$}, by the formula 
$$\gr_K(z, 0):=\gr_{1/K}(1/z,\infty)$$
where $1/K:=\{z\in \sph: 1/z\in K\}$. We call the push-forward of $\nu_{1/K}^{\infty}$ 
under the map $z\to 1/z$ the {\em harmonic 
measure of $K$ with respect to $0$} and denote this measure by $\nu_K^0$.
It follows that
\begin{equation}\label{E:greenzero}
- \int \int \log \left \vert \frac{z-w}{zw} \right \vert d\nu_K^0(z) d\nu_K^{0}(w) = \lim_{z \to 0} [\gr_K(z, 0) + \log |z|].
\end{equation}

We are now ready to define the capacity with respect to 0 and $\infty$. Consider the matrix
\[
\Gamma (K):=
\begin{bmatrix}
\displaystyle \lim_{z \to 0} [\gr_K(z, 0) + \log |z|] & \gr_K(0, \infty) \\
\gr_K(\infty, 0) & \displaystyle \lim_{z \to \infty} [ \gr_K(z, \infty) - \log |z|]
\end{bmatrix}.
\]
Define the {\em Robin's constant of $K$ with respect to 0 and $\infty$} to be 
$\robcons(K):=\textnormal{val}(\Gamma (K))$.
The {\em capacity of $K$ with respect to 0 and $\infty$} is 
\[
\Cancap(K):=e^{-\robcons(K)}.
\]
\begin{remark}
In fact, Cantor \cite{Cantor:edtdsa80} introduced the capacity of an adelic set with respect to a finite subset of 
the Riemann sphere $\sph$. We use here only the Archimedean part of his definition with respect to $0$ and $\infty$.
\end{remark}

\section{Homogeneous potential theory}\label{S:homopotential}

We now recall the homogeneous potential theory introduced by DeMarco \cite{DeM:dorm03}.
The definitions are analogous to the
logarithmic potential, energy and capacity in the complex plane.
For $\mathbf z = (z_1, z_2)$ and $\mathbf w = (w_1, w_2)$ in $\C^2$, put 
$$\mathbf z \wedge \mathbf w := z_1w_2 - z_2 w_1.$$
Let $\Vert \cdot \Vert$ denote the Euclidean norm on $\C^2$.
It follows that $|\mathbf z \wedge \mathbf w| \leq 2 \Vert \mathbf z \Vert  \Vert \mathbf w \Vert$.
A function $f: \C^2 \to \R \cup \{-\infty\}$ is called {\em logarithmically homogeneous} if for
$\lambda \in \mathbb{C}\setminus\{0\}$, we have $f(\lambda \mathbf z) = f(\mathbf z)+ \log |\lambda|.$
For a probability measure $\mu$ on $\C^2$ with compact support,
the {\em homogeneous potential} of $\mu$ is given by
$$
\U^\mu(\mathbf z) := \int_{\C^2} \log \frac{1}{\vert \mathbf z \wedge \mathbf w \vert} \, d\mu(\mathbf w).
$$
Observe that $-\U^\mu$ is plurisubharmonic and logarithmically homogeneous.
The {\em homogeneous energy} of $\mu$ is given by
\[
\I(\mu) :=  \int_{\C^2} \int_{\C^2} \log \frac{1}{\vert \mathbf z \wedge \mathbf w \vert} \, d\mu(\mathbf w) d\mu(\mathbf z).
\]
%We use the same notation for the homogeneous potential and energy of a measure on $\C^2$ as for the logarithmic potential 
%and energy of a measure on $\C$. However, this should not cause confusion, as the context will clarify which 
%concepts we are referring to. 
The {\em homogeneous capacity} of a compact set $\nice \subset \C^2$ is defined as
$$\homcap(\nice):=\exp (-\inf_{\mu \in \mathcal M(\nice)} \I(\mu)).$$
%where the infimum is taken over $\mathcal M(\nice)$, the probability measures supported in $\nice$.
%For compact and circled $\nice$, we have $\nice$ is pluripolar if and only if $\homcap (\nice)=0$.

\smallskip

Let 
$
L(\mathbb{C}^2) := \{ u \text{ plurisubharmonic in } \mathbb{C}^2 : u(\mathbf z) \leq \log^+ \Vert \mathbf z \Vert + c \},
$
where the constant $c$ may depend on $u$.
For $\nice \subset \mathbb{C}^2$ compact, define 
\[
\ext_\nice(\mathbf z) := \sup \{ u(\mathbf z) : u \in L(\mathbb{C}^2), u \leq 0 \text{ on } \nice \}.
\]
The upper semicontinuous regularization $
\ext_\nice^*(\mathbf z) := \limsup_{\mathbf w \to\mathbf z} \ext_{\nice}(\mathbf w)$ is called the 
{\em pluricomplex Green's function} of $\nice$. Either $\ext_\nice^* \equiv +\infty$, which occurs precisely when 
$\nice$ is pluripolar, i.e., $\nice \subseteq \{ u = -\infty \}$ for some plurisubharmonic function $u \not\equiv -\infty$
in a neighborhood of $\nice$, or $\ext_\nice^* \in L(\mathbb{C}^2)$. 
In the latter case, the {\em Monge--Amp\`ere measure}
\begin{equation}\label{mameas} \mu_{\nice}:= dd^c \ext_\nice^*  \wedge dd^c \ext_\nice^* \end{equation} 
is a well defined probability measure supported in $\nice$. 
Here $d=\partial +\bar \partial$ and $d^c =i(\bar \partial +\partial)/2\pi$ so 
that $dd^c =i\partial \bar \partial/\pi$. We remark that in $\C$, replacing ``plurisubharmonic'' by ``subharmonic'' 
we recover the Green's function 
$\gr_{\nice}(\cdot,\infty)=  \ext_\nice^*(\cdot)$ of $\nice \subset \C$ with pole at $\infty$; 
in this case $\mu_{\nice}$ is the Laplacian of $\gr_{\nice}(\cdot,\infty)$. Finally, we say that $\nice$ is regular if $\ext_\nice $ 
is continuous; equivalently, $\ext_\nice = \ext_\nice^*$ (see \cite{Klimek91} for details).
\smallskip

The {\em Robin function} of a nonpluripolar set $\nice$ is defined as
\[
\robin_\nice(\mathbf z) := \limsup_{|\lambda| \to \infty} \left[\ext_\Sigma^*(\lambda \mathbf z) - \log |\lambda| \right].
\]
Observe that $\robin_\nice$ is logarithmically homogeneous. The definitions of $\ext_\nice$ and $\robin_\nice$ make sense for general bounded 
Borel sets $\nice$, and it is known that for $\nice$ nonpluripolar the Robin function $\robin_\nice(\mathbf z)$ 
is plurisubharmonic in $\mathbb{C}^2$. If $\nice$ is compact, circled, and nonpluripolar, then
\[
\ext_\Sigma^*( \mathbf z) = \robin^+_{\nice}(\mathbf z) :=\max \{0, \robin_\nice(\mathbf z) \}
\]
and $\mu_{\nice}$ satisfies ${\rm supp} (\mu_{\nice}) \subseteq \{\robin_\nice=0\}$ (see \cite{BLL:hilbert}). 
If $\nice$ is compact, circled and pseudoconvex as defined in \cite{DeM:dorm03}, i.e., $\nice$ is the closure of an $S^1$-invariant, bounded, pseudoconvex domain
in $\C^2$ containing the origin,  then
$\robin_\nice$ has a nice geometrical interpretation:
\[
\robin_\nice (\mathbf z)= \inf \{-\log \lambda: \lambda \mathbf z \in \nice\}.
\] 
In this case $\robin_{\nice}$ is a continuous, plurisubharmonic function
on $\C^2$ that scales logarithmically, $\robin_{\nice}^{-1}(-\infty)=\{(0, 0)\}$, and 
$$\ext_\Sigma( \mathbf z) = \robin^+_{\nice}(\mathbf z) \ \hbox{and} \ \mu_{\nice}= dd^c \robin_{\nice}^+ \wedge dd^c \robin_{\nice}^+.$$  
Conversely, if
$\nice = \{\mathbf z : f(\mathbf z ) \leq 0\}$ for a continuous, plurisubharmonic function
$f : \C^2 \to \R \cup \{-\infty\}$ which is logarithmically homogeneous and 
$f^{-1}(-\infty)=\{(0, 0)\}$, then $\nice$ is compact, circled and pseudoconvex
and $\robin_{\nice} \equiv f$. 
\smallskip

Let $\pi: \C^2 \setminus \{(0,0)\} \to \sph$
given by $\pi (z_1, z_2)= z_1/z_2$ if $z_2 \neq 0$ and $\pi (z_1, 0)= \infty$. The following two results were proved by DeMarco in the compact, circled and pseudoconvex (i.e., regular) case; they remain valid without the regularity assumption; i.e., for compact, circled and nonpluripolar sets. We indicate the simple modifications to DeMarco's proofs in an appendix.

\begin{result}[\cite{DeM:dorm03}]\label{pushf}
Let $\nice \subset \C^2$ be compact, circled and nonpluripolar and let $\nu$ be the unique probability measure on $\sph$
such that $dd^c (\robin_{\nice}) = \pi^* \nu$. Then
$\pi_* (\mu_{\nice})= \nu$.
\end{result}

For every compact set, there always exists a homogeneous energy minimizing measure on $\nice$.
In general, such a measure need not be unique. However, for compact, circled and nonpluripolar sets we have uniqueness 
if we restrict attention to $S^1$-invariant measures.

\begin{result}[\cite{DeM:dorm03}]\label{R:unique}
For any compact, circled and nonpluripolar $\nice \subset \C^2$, the measure $\mu_\nice$ is the unique, 
$S^1$-invariant homogeneous energy minimizing measure on $\nice$.
%The homogeneous potential function of $\mu_\nice$ is constant on the boundary of
%$\nice$ and satisfies
%\[
%-\U^{\mu_\nice} = \robin_\nice + \log(\homcap(\nice)).
%\]
\end{result}

\noindent In particular, the homogeneous capacity of such sets is always positive.

\subsection{Discriminant and Resultant}\label{SS:DiscRes}
Consider a polynomial mapping $F: \C^2 \to \C^2$ of the form
$
F(z_1,z_2)= (F_1(z_1,z_2) + a_1, F_2(z_1,z_2)+ a_2),
$
where $F_1, F_2 \in \C[z_1, z_2]$ are homogeneous polynomials of degree $d$.
We define the resultant of $F$ as the resultant of $F_1(z_1, 1)$ and $F_2(z_1, 1)$
considered as polynomials of degree $d$
(their degree in $z_1$ may be lower than their total degree):
\begin{equation}\label{resdef}\res (F)=\trueres_{d,d}(F_1(z_1, 1), F_2(z_1, 1)).\end{equation}
For example, if a polynomial $p$ is of degree $d$,
and $l$ denotes the degree of $q$ with $l < d$, then we have
(see \cite[Chapter 12]{GKZ:drmd08})
\begin{equation}\label{resdef2}
\trueres_{d,d}(p, q) = (-1)^{d(d-l)} {\rm lead} (p)^d{\rm lead} (q)^d \prod_{i,j} (x_i - y_j),
\end{equation}
where ${\rm lead} (p)$ denotes the leading coefficient of $p$; $x_1, \dots, x_d$ are the roots of $p$; and $y_1, \dots, y_l$ are the roots of $q$.
If the polynomials $p$ and $q$ have integer coefficients then $\trueres_{d,d}(p, q) \in \Z$.
Note that $\res (F)$ vanishes if and only if $F_1$ and $F_2$ have a
common factor.
If $F_1$ and $F_2$ are factored as
$F_1(z_1, z_2)= \prod_{i=1}^d (z_1, z_2) \wedge \mathbf a_i$ and
$F_2(z_1, z_2)= \prod_{j=1}^d (z_1, z_2) \wedge \mathbf b_j$
for some $\mathbf a_i$ and $\mathbf b_j$ in $\C^2$, then
$$\res (F)=\prod_{i,j}\mathbf a_i \wedge\mathbf b_j.$$
%where $(a_1, a_2) \wedge (b_1, b_2)= a_1b_2-b_1a_2$.
If $\Phi: \C^2 \to \C^2$ is a linear mapping, then we have (see \cite[Proposition 6.1]{DeM:dorm03})
\begin{equation}\label{E:composition}
\res (\Phi \circ F)= \res (\Phi)^{\deg F} \res(F).
\end{equation}
Lastly, if $p$ is a polynomial of degree $d$ with roots $x_1, \dots, x_d$ then the discriminant of $p$ is
given by
\[
\discr(p)= (-1)^{\frac{d(d-1)}{2}}  {\rm lead} (p)^{2d-2} \prod_{i \neq j} (x_i-x_j).
\]
If the polynomial $p$ has integer coefficients then $\discr(p) \in \Z$.

\section{Polynomial mappings with integer coefficients in $\C^2$}\label{S:homo_equi}

In this section, we present an equidistribution result for the zeros of certain
polynomial mappings with integer coefficients.
%Specifically, we explore the distribution of the zeros of these mappings and demonstrate that, under 
%certain conditions, these zeros are equidistributed with respect to the homogeneous equilibrium measure 
%of the set with homogeneous capacity 1. 
To establish this result, we first derive a relationship between specific zeros of polynomial mappings 
$F: \C^2 \to \C^2$ of the form 
\begin{equation}\label{eq:homo_poly}
F(z_1,z_2)= (F_1(z_1,z_2) + a_1, F_2(z_1,z_2)+ a_2),
\end{equation}
where $F_1, F_2 \in \C[z_1, z_2]$ are homogeneous polynomials of degree $d \geq 2$.
Note that if $\zero \in \C^2$ is a zero of $F$, then $c\,\zero$ is also a zero of $F$, where $c$
is a $d$-th root of unity. Consequently, the product 
$\prod_{i \neq k} \vert \zero_i \wedge \zero_k \vert$ taken over all $d^2$ zeros of $F$ is always zero. 
However, in the case of certain polynomials, when we consider only a specific subset of the zeros, 
we obtain a relationship involving $\res (F)$ (recall (\ref{resdef}) for this definition).

\begin{lemma}\label{L:complex_coeff}
Let $F: \C^2 \to \C^2$ be a polynomial mapping of the form 
%\eqref{eq:homo_poly} with $a_1=0$ and $a_2=-1$.
$$F(z_1,z_2)= (F_1(z_1,z_2), F_2(z_1,z_2)-1),$$
where $F_1, F_2 \in \C[z_1, z_2]$ are homogeneous polynomials of degree $d \geq 2$.
Let $\zero_1, \dots , \zero_d \in \C^2$
be any $d$ zeros of $F$ such that $\zero_i \wedge \zero_k \neq 0$ for $i \neq k$.
Then the following identity holds:
%\[
%\vert \res (F) \vert^{\frac{2d-2}{d}}\prod_{i \neq k} \vert \zero_i \wedge \zero_k \vert =\vert\discr(f)\vert,
%\]
\[
 \prod_{i \neq k} \vert \zero_i \wedge \zero_k \vert =\vert \res (F) \vert^{\frac{2-2d}{d}} \vert\discr(p)\vert,
\]
where $\discr(p)$ is the discriminant of the univariate polynomial $p(x) := F_1(x,1)$.
\end{lemma}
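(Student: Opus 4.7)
The plan is to parametrize the $d$ pairwise non-parallel zeros $\zero_1, \dots, \zero_d$ of $F$ explicitly in terms of the roots of $p$, and then to evaluate both sides of the claimed identity by direct substitution. Since $F_1$ is homogeneous of degree $d$, it has at most $d$ distinct zero-directions in $\C^2 \setminus \{(0,0)\}$, and the hypothesis $\zero_i \wedge \zero_k \neq 0$ for $i \neq k$ forces exactly $d$ such directions to appear.

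The main case is when the coefficient $a$ of $z_1^d$ in $F_1$ is nonzero, so that $p(z) = F_1(z, 1)$ has genuine degree $d$ with roots $\alpha_1, \dots, \alpha_d$. The zero-directions of $F_1$ are then exactly $(\alpha_j, 1)$, so after relabeling we may write $\zero_i = \lambda_i (\alpha_i, 1)$ for some $\lambda_i \in \C \setminus \{0\}$. The equation $F_2(\zero_i) = 1$ combined with the homogeneity of $F_2$ becomes $\lambda_i^d F_2(\alpha_i, 1) = 1$. A direct calculation yields $\zero_i \wedge \zero_k = \lambda_i \lambda_k (\alpha_i - \alpha_k)$, so that
\[
\prod_{i \neq k} |\zero_i \wedge \zero_k| = \Bigl(\prod_{i=1}^d |\lambda_i|\Bigr)^{2(d-1)} \prod_{i \neq k} |\alpha_i - \alpha_k|.
\]

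To close the argument I will feed in two identities from Section~\ref{SS:DiscRes}. The discriminant formula gives $|\discr(p)| = |a|^{2d-2}\prod_{i \neq k}|\alpha_i - \alpha_k|$, which handles the second factor. For the first, write $q(z) := F_2(z, 1)$ with roots $\beta_1, \dots, \beta_l$; formula~\eqref{resdef2} applied to $p$ and $q$ telescopes, using $\prod_j(\alpha_i - \beta_j) = q(\alpha_i)/{\rm lead}(q)$, to
\[
|\res(F)| = |a|^d \prod_{i=1}^d |F_2(\alpha_i, 1)| = |a|^d \Bigl(\prod_{i=1}^d |\lambda_i|\Bigr)^{-d}.
\]
Solving the latter for $\prod_i |\lambda_i|$ and substituting into the previous display, all powers of $|a|$ cancel, the exponent $(2-2d)/d$ on $|\res(F)|$ emerges, and the desired identity follows.

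The step I expect to require the most care is the degenerate case in which the $z_1^d$ coefficient of $F_1$ vanishes, so $p$ has degree strictly less than $d$ and $(1, 0)$ enters as a zero-direction of $F_1$. First one observes that if both $F_1$ and $F_2$ lose their $z_1^d$ coefficient, then $z_2$ is a common factor, $\res(F) = 0$, and the $d$ pairwise non-parallel zeros cannot exist; so in the degenerate case $F_2(z, 1)$ must have genuine degree $d$. The pairwise non-parallel hypothesis then forces $p$ to have degree exactly $d-1$, with one zero $\zero_d = (\mu, 0)$ of $F$ along $(1, 0)$ (determined by $\mu^d F_2(1, 0) = 1$) and the remaining $d - 1$ zeros distributed over the affine roots as in the main case. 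The extra wedge products $\zero_i \wedge \zero_d = -\lambda_i \mu$ are then accounted for separately, and~\eqref{resdef2} is applied after swapping the roles of $p$ and $q$. Carefully tracking the powers of ${\rm lead}(p)$, now the coefficient of $z_1^{d-1} z_2$ in $F_1$, through both $|\discr(p)|$ and $|\res(F)|$, the same cancellation should again yield the identity.
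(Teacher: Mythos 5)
Your main-case argument is in substance the same as the paper's: you parametrize the $d$ pairwise non-parallel zeros as $\zero_i = \lambda_i(\alpha_i,1)$ with $\lambda_i^d F_2(\alpha_i,1)=1$ (the paper writes them as $\mathbf a_i / (\prod_j \mathbf a_i \wedge \mathbf b_j)^{1/d}$ after factoring $F_1,F_2$ into wedge products) and then evaluate the wedge products directly using the discriminant and resultant formulas from Section~\ref{SS:DiscRes}. That part is correct.

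The gap is in your degenerate case. You correctly rule out $z_2 \mid \gcd(F_1,F_2)$, correctly deduce $\deg p = d-1$, and correctly locate the extra zero $\zero_d = \mu(1,0)$. But your final sentence\,---\,that ``the same cancellation should again yield the identity''\,---\,is false. Tracking the leading coefficient $a := {\rm lead}(p)$ (the coefficient of $z_1^{d-1}z_2$ in $F_1$) one finds
\[
\prod_{i\neq k}|\zero_i\wedge\zero_k| \;=\; \Bigl(\textstyle\prod_{i=1}^{d-1}|\lambda_i|\Bigr)^{2(d-1)} |\mu|^{2(d-1)}\,|a|^{4-2d}\,|\discr(p)|,
\]
while applying \eqref{resdef2} with $p$ and $q$ swapped (since now $\deg q = d > \deg p$) gives $|\res(F)| = |a|^d\,|\mu|^{-d}\bigl(\prod_i|\lambda_i|\bigr)^{-d}$, hence $|\res(F)|^{(2-2d)/d} = |a|^{2-2d}|\mu|^{2d-2}\bigl(\prod|\lambda_i|\bigr)^{2d-2}$. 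Comparing, the ratio of the two sides is $|a|^{2}$, not $1$. A concrete check: with $d=3$, $F_1 = z_2(2z_1^2 - z_2^2)$, $F_2 = z_1^3 + z_2^3$, one computes $|\res(F)| = 7$, $\discr(p) = 8$, so the right side of the claimed identity is $8/7^{4/3}$, while the left side equals $32/7^{4/3}$. (The identity would hold if $\discr(p)$ were replaced by the discriminant of $p$ regarded as a \emph{formal degree-$d$} polynomial, equivalently the discriminant of the binary form $F_1$, which equals $a^2$ times the genuine degree-$(d-1)$ discriminant.) For what it's worth, the paper's own proof also tacitly assumes $\deg p = d$ when it writes $F_1 = c\prod_i(z_1 - r_i z_2)$ and $\mathbf a_i = (c_ir_i, c_i)$; and for the application in Lemma~\ref{L:funda_integer} only the inequality matters, so the extra factor $|a|^2 \geq 1$ (for integer coefficients) is harmless there. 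Still, as a proof of the stated identity your degenerate-case sketch does not close, and asserting that it does is the error you should fix.
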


\begin{proof}
Since both $F_1$ and $F_2$ are homogeneous polynomials in two variables,
we can factor $F_1$ and $F_2$ as
\[
F_1(z_1, z_2)= \prod_{i=1}^d (z_1, z_2) \wedge \mathbf a_i \quad {\text{and}} \quad 
F_2(z_1, z_2)= \prod_{j=1}^d (z_1, z_2) \wedge \mathbf b_j
\]
for some $\mathbf a_i$ and $\mathbf b_j$ in $\C^2$.
Observe that $F(\mathbf a_i)=(0, (\prod_j \mathbf a_i \wedge \mathbf b_j)-1)$. 
%Note that the zero set of $F$ is same as $F_h^{-1}(0,1)$.
Therefore, all $d^2$ zeros of $F$
are given by $$\frac{\mathbf a_i}{(\prod_j \mathbf a_i \wedge \mathbf b_j)^{1/d}}$$
for all $1 \leq i \leq d$ and all $d$-th roots of $\prod_j \mathbf a_i \wedge \mathbf b_j$. 
%Let $\zero_1, \dots , \zero_d$
%be any $d$ preimages of $(0,1)$ with $\zero_i \wedge \zero_k \neq 0$ for $i \neq k$.
Now, consider
\begin{align}
\prod_{i \neq k} \vert \zero_i \wedge \zero_k \vert 
=& \prod_{i \neq k}\Big\vert \frac{\mathbf a_i}{(\prod_j \mathbf a_i \wedge \mathbf b_j)^{1/d}} \wedge \frac{\mathbf a_k}{(\prod_j \mathbf a_k \wedge \mathbf b_j)^{1/d}}\Big\vert \notag \\
%=& \prod_{i \neq k}\Big\vert \frac{\mathbf a_i}{\vert(\prod_j \mathbf a_i \wedge \mathbf b_j)^{1/d}\vert} \wedge \frac{\mathbf a_k}{\vert(\prod_j \mathbf a_k \wedge \mathbf b_j)^{1/d}\vert}\Big\vert \notag \\
=& \prod_{i} \frac{1}{\vert (\prod_j \mathbf a_i \wedge \mathbf b_j)^{1/d}\vert^{d-1}} 
\prod_{k} \frac{1}{\vert(\prod_j \mathbf a_k \wedge \mathbf b_j)^{1/d}\vert^{d-1}} 
\prod_{i \neq k} \vert \mathbf a_i \wedge \mathbf a_k \vert \notag \\
=& \frac{1}{\vert \res (F) \vert^{(2d-2)/d}} \prod_{i \neq k} \vert \mathbf a_i \wedge \mathbf a_k \vert. \label{E:roots_wedge}
\end{align}
We can express $F_1$ as
$F_1(z_1, z_2)= z_2^d \cdot p (z_1/z_2)$,
where $p(x)$ is the univariate polynomial $F_1(x,1)$.
If $p(x)= c \prod_i (x-r_i)$,
then
$F_1(z_1, z_2)= c \prod_i (z_1-r_i z_2)=c\prod_i (z_1, z_2) \wedge (r_i, 1)$. 
Thus, by relabeling if necessary, for each $i$, we have
$\mathbf a_i= (c_i r_i, c_i)$
%$$\mathbf a_1= (c_1r_1, c_1), \  \mathbf a_2= (c_2 r_2, c_2), \dots, \mathbf a_d= (c_d r_d, c_d).$$
for some constants $c_i \in \C$ satisfying $\prod_i c_i=c$.
This gives us
\begin{equation}\label{E:coeff_wedge}
\prod_{i \neq k} \mathbf a_i \wedge \mathbf a_k = \prod_i c_i^{2d-2} \prod_{i \neq k} (r_i - r_k)
=c^{2d-2} \prod_{i \neq k} (r_i - r_k) = (-1)^{\frac{d(d-1)}{2}}\discr (p).
\end{equation}
The proof follows from \eqref{E:roots_wedge} and \eqref{E:coeff_wedge}.
\end{proof}

We now apply Lemma~\ref{L:complex_coeff} to obtain a lower bound on the pairwise wedge products of
zeros of polynomial mappings of the form \eqref{eq:homo_poly} with integer coefficients.
\begin{lemma}\label{L:funda_integer}
Let $F: \C^2 \to \C^2$ be a polynomial mapping of the form 
%$$F(z_1,z_2)= (F_1(z_1,z_2)+m, F_2(z_1,z_2)+n),$$
\eqref{eq:homo_poly} with integer coefficients.
Let $\zero_1, \dots , \zero_d \in \C^2$
be any $d$ zeros of $F$ such that $\zero_i \wedge \zero_k \neq 0$ for $i \neq k$.
Then the following inequality holds:
%\[
%\vert \res (F) \vert^{\frac{2d-2}{d}}\prod_{i \neq k} \vert \zero_i \wedge \zero_k \vert \geq 1.
%\]
\[
\prod_{i \neq k} \vert \zero_i \wedge \zero_k \vert \geq \vert \res (F) \vert^{\frac{2-2d}{d}}.
\]
\end{lemma}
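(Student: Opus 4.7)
The plan is to reduce the inequality to Lemma~\ref{L:complex_coeff} by normalizing the constant term $(a_1,a_2)$ via a unimodular change of coordinates on the target $\C^2$. Such a linear map $\Phi:\C^2\to\C^2$ preserves the zero set $F^{-1}(0,0)$ since it fixes the origin, and by \eqref{E:composition} preserves $|\res(F)|$ when $\det\Phi=\pm 1$; crucially, a unimodular integer matrix also preserves the integrality of the homogeneous parts after composition. First observe that the hypotheses force $\res(F)\neq 0$ and $(a_1,a_2)\neq(0,0)$: if $\res(F)\neq 0$ then $F_1,F_2$ share no common factor, so their only common zero is the origin, and $(a_1,a_2)=(0,0)$ would collapse every $\zero_i$ to the origin, contradicting $\zero_i\wedge\zero_k\neq 0$.

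Set $g:=\gcd(a_1,a_2)\geq 1$ and choose integers $u,v$ with $u(a_1/g)+v(a_2/g)=1$ by Bezout. Define
\[
\Phi:=\begin{pmatrix} a_2/g & -a_1/g \\ -u & -v \end{pmatrix}\in \mathrm{GL}_2(\Z),\qquad \det\Phi=-1,
\]
so that $\Phi(a_1,a_2)^T=(0,-g)^T$. Then $\tilde F:=\Phi\circ F=(\tilde F_1,\tilde F_2-g)$, with $\tilde F_1,\tilde F_2\in\Z[z_1,z_2]$ homogeneous of degree $d$ and $\tilde F^{-1}(0,0)=F^{-1}(0,0)$. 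Consider also $F':=(\tilde F_1/g,\tilde F_2/g-1)$; it has the same zero set as $\tilde F$ and fits the hypothesis of Lemma~\ref{L:complex_coeff}, yielding
\[
\prod_{i\neq k}|\zero_i\wedge \zero_k|=|\res(F')|^{(2-2d)/d}\,|\discr(p')|,
\]
where $p'(x):=\tilde F_1(x,1)/g$. Writing $\tilde p(x):=\tilde F_1(x,1)\in\Z[x]$ and using the scaling identities $\res(cF_1,cF_2)=c^{2d}\res(F_1,F_2)$ and $\discr(p/g)=g^{-(2d-2)}\discr(p)$, together with $|\res(\tilde F)|=|\det\Phi|^d|\res(F)|=|\res(F)|$ from \eqref{E:composition}, this rearranges to
\[
\prod_{i\neq k}|\zero_i\wedge \zero_k|=|g|^{2d-2}\,|\discr(\tilde p)|\,|\res(F)|^{(2-2d)/d}.
\]

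Finally, since the $d$ zeros $\zero_i$ are pairwise non-parallel (by $\zero_i\wedge\zero_k\neq 0$), they lie in $d$ pairwise distinct complex directions in $\C^2$, which forces the roots of $\tilde p$ to be distinct; combined with $\tilde p\in\Z[x]$ this yields $\discr(\tilde p)\in\Z\setminus\{0\}$, so $|\discr(\tilde p)|\geq 1$. Together with $|g|\geq 1$, this gives the required bound
$\prod_{i\neq k}|\zero_i\wedge\zero_k|\geq|\res(F)|^{(2-2d)/d}$. The main obstacle is purely bookkeeping: one must carefully track how the $g$-powers from rescaling the $\tilde F_j$ by $1/g$ interact with the exponent $(2-2d)/d$ and the leading-coefficient scaling in $\discr$, and verify that the leftover factor appears as $|g|^{2d-2}$ with a nonnegative exponent so that $|g|\geq 1$ can be absorbed. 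A secondary technical point, which can be handled by a preliminary generic linear change of source coordinates (which rescales $\res(F)$ by a nonzero constant and changes nothing of substance), is to arrange that $\tilde p$ has degree exactly $d$ so that the degree-$d$ convention for $\discr$ applies without modification.
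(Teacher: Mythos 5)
Your proof is correct and takes essentially the same route as the paper's: reduce to Lemma~\ref{L:complex_coeff} via a unimodular (Bézout-built) linear change of coordinates that preserves $|\res(F)|$ and the zero set, then exploit integrality of the discriminant. The only cosmetic difference is that you fold the paper's three cases ($a=(0,-1)$, $\gcd=1$, $\gcd=g>1$) into a single computation by dividing the transformed map by $g$ at once rather than rescaling the zeros by $g^{1/d}$; the resulting extra factor $|g|^{2d-2}\ge 1$ is exactly the same $g^{2(d-1)}$ that appears implicitly in the paper's last case, and your bookkeeping checks out.
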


\begin{proof}
We begin by noting that at least one of $a_1$ and $a_2$ must be non-zero.
If both $a_1$ and $a_2$ were zero, there would be no $\zero_i$'s satisfying the hypothesis.
Next, assume that $a_1=0$ and $a_2=-1$. 
Under the assumption on the $\zero_i$'s, 
by Lemma~\ref{L:complex_coeff}, $\discr(p) \neq 0$ for
the polynomial $p(x):=F_1(x,1)$.
Since $F$ has integer coefficients, we know that $ \vert\discr(p)\vert \geq 1$. 
Thus, applying Lemma~\ref{L:complex_coeff}, we obtain the desired inequality.
\smallskip

Now, suppose that $a_1$ and $a_2$ are such that $\vert \gcd(a_1, a_2)\vert=1$.
In this case, we can reduce to the situation considered above.
By Bézout's identity,
there exist integers $x$ and $y$ such that $a_1x + a_2y = 1$.
Now consider the polynomial mapping with integer coefficients
given by
$\Phi(z_1, z_2)=(-a_2 z_1+ a_1z_2, -x z_1- y z_2)$.
%the matrix
%\[
 %  M=
  %\left[ {\begin{array}{cc}
   %-c_2 & c_1 \\
   %x & y \\
 % \end{array} } \right],
%\]
It is easy to verify that $\vert \res (\Phi) \vert =1$ and $\Phi (-a_1, -a_2)=(0,1)$. 
Observe that $\vert \res (\Phi \circ F) \vert = \vert \res (F) \vert$ from \eqref{E:composition}, and
the zeros of $F$ and $\Phi \circ F$ are the same. Moreover,
$\Phi \circ F$ is of the form \eqref{eq:homo_poly}
with $a_1=0$ and $a_2=-1$.
\smallskip

Finally, if $\vert \gcd(a_1, a_2)\vert=g>1$, then the zeros of $F$ are the zeros of
\[
\tilde F(z_1,z_2):= (F_1(z_1,z_2) + a_1/g, F_2(z_1,z_2)+ a_2/g)
\]
scaled by the positive $d$-th root of $g$. By the last case,
the required inequality holds for the zeros of $F$. We have $\res (F)= \res (\tilde F)$.
Thus the inequality also holds for 
the zeros of $\tilde F$.
\end{proof}

Consider a sequence of polynomial mappings $\{F_n\}$ of the form \eqref{eq:homo_poly}
such that $d_n:=\deg F_n \to \infty$ as $n \to \infty$. 
Let $\mu_n$ denote the probability measure uniformly distributed on the zeros of $F_n$.
It turns out that if the sequence $\{\mu_n\}$ converges, the limit measure possesses a certain invariance property.
\begin{lemma}\label{L:sub_inv}
Assume that the sequence $\{\mu_n\}$ converges to $\mu$ in the weak topology. Then $\mu$ is $S^1$-invariant.
\end{lemma}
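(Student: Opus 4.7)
The core observation is that each $\mu_n$ is already invariant under the action of the group of $d_n$-th roots of unity, and as $d_n \to \infty$ this group becomes dense in $S^1$. Concretely, if $F_n(\zero) = 0$, then writing $F_n = (F_{n,1} + a_{n,1}, F_{n,2} + a_{n,2})$ with $F_{n,j}$ homogeneous of degree $d_n$, for any $d_n$-th root of unity $c$ we have $F_{n,j}(c\,\zero) = c^{d_n} F_{n,j}(\zero) = F_{n,j}(\zero) = -a_{n,j}$, so $c\,\zero$ is also a zero of $F_n$. Hence the uniform measure $\mu_n$ on the zero set of $F_n$ is invariant under multiplication by any $d_n$-th root of unity.

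Fix $\theta \in \R$ and a bounded continuous function $f$ on $\C^2$. For each $n$, choose an integer $k_n$ so that $c_n := e^{2\pi i k_n / d_n}$ satisfies $|c_n - e^{i\theta}| \leq 2\pi/d_n$; this is possible because $d_n \to \infty$. By the invariance just established,
\begin{equation*}
\int_{\C^2} f(c_n \zero)\, d\mu_n(\zero) \;=\; \int_{\C^2} f(\zero)\, d\mu_n(\zero).
\end{equation*}
Letting $n \to \infty$, the right-hand side converges to $\int f\, d\mu$ by weak* convergence.

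It remains to show that the left-hand side converges to $\int f(e^{i\theta}\zero)\, d\mu(\zero)$. Since $\mu_n \to \mu$ weakly and $\mu$ is a probability measure on the Polish space $\C^2$, Prokhorov's theorem gives tightness: for any $\eps > 0$ there is a compact set $\U \subset \C^2$ with $\mu_n(\C^2 \setminus \U) < \eps$ for all $n$. On the compact set $\U_\theta := \{e^{i\alpha}\zero : |\alpha - \theta| \leq 1,\ \zero \in \U\}$, the function $f$ is uniformly continuous, and $|c_n \zero - e^{i\theta}\zero| \leq |c_n - e^{i\theta}| \cdot \|\zero\| \to 0$ uniformly on $\U$. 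Splitting the integral over $\U$ and its complement and using $\|f\|_\infty < \infty$, we obtain
\begin{equation*}
\Big| \int f(c_n\zero)\, d\mu_n - \int f(e^{i\theta}\zero)\, d\mu_n \Big| \;\longrightarrow\; 0.
\end{equation*}
Meanwhile $\int f(e^{i\theta}\zero)\, d\mu_n \to \int f(e^{i\theta}\zero)\, d\mu$ by weak* convergence applied to the bounded continuous function $\zero \mapsto f(e^{i\theta}\zero)$. Combining these gives $\int f(e^{i\theta}\zero)\, d\mu = \int f\, d\mu$, so $\mu$ is $S^1$-invariant.

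The only delicate point is the interchange of limits in the step comparing $\int f(c_n\zero)\, d\mu_n$ and $\int f(e^{i\theta}\zero)\, d\mu_n$; this is handled by the tightness argument above and is essentially the only nontrivial input beyond the elementary homogeneity observation.
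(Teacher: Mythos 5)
Your proof is correct and follows essentially the same line as the paper's: both exploit the invariance of $\mu_n$ under the $d_n$-th roots of unity and the density of these roots in $S^1$ as $d_n \to \infty$. The only difference is that you justify the interchange of limits (passing from $\int f(c_n\zero)\,d\mu_n$ to $\int f(e^{i\theta}\zero)\,d\mu$) carefully via tightness and uniform continuity on compacta, whereas the paper simply invokes the dominated convergence theorem and weak convergence; your version is the more precise one, since the integrand and the measure vary with $n$ simultaneously.
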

\begin{proof}
As noted earlier, if $\zero$ is a zero of $F_n$, then $c_n \zero$ is also a zero of $F_n$, where $c_n$
is a $d_n$-th root of unity.
Therefore, $\mu_n$ is invariant under the action of the $d_n$-th roots of unity. Specifically, 
for any continuous bounded function $f$ on $\C^2$,
%and any $d_n$-th root of unity $c$, 
we have
\[
\int f(c_n \mathbf z) \, d\mu_n(\mathbf z) = \int f(\mathbf z) \, d\mu_n(\mathbf z).
\]
Now, for a fixed $\theta \in S^1$, 
there exists a sequence of $d_n$-th roots of unity $c_n$ that converges to $\theta$ as $n \to \infty$.
By applying the dominated convergence theorem and leveraging the weak convergence of $\{\mu_n\}$ to $\mu$,
we obtain
\[
\int f(\theta \mathbf z) \, d\mu(\mathbf z) = \lim_{n \to \infty} \int f(c_n \mathbf z) \, d\mu_n(\mathbf z) 
= \lim_{n \to \infty} \int f(\mathbf z) \, d\mu_n(\mathbf z) = \int f(\mathbf z) \, d\mu(\mathbf z).
\]
%\[
%\int f(\theta x) \, d\mu(x) = \int f(x) \, d\mu(x).
%\]
Since this holds for any continuous bounded function $f$ and any $\theta \in S^1$, 
we conclude that $\mu$ is  $S^1$-invariant.
\end{proof}

%We are now ready for an equidistribution result mentioned earlier.
Before we proceed with the proof of Theorem~\ref{T:homo_equi}, 
we first recall some important definitions.
%\[
%{\red \rho \to e^{\rho}}  \ \rho_{\nice}(z):= \inf \Big\{ \frac{1}{\vert c \vert} : cz \in {\nice} \text{ for } c \in \C \Big\},
%\]
Let $\Sigma$ be a compact and circled subset of $\C^2$ and let $F$ be a polynomial mapping  
of the form \eqref{eq:homo_poly}.  Recall the definition
\[
\hgttwo_{\nice}(F):=\frac{1}{(\deg F)^2} \Big ( \log \vert \res (F) \vert 
+ \sum_{\zero \in F^{-1}(0,0)} \robin_{\nice}^+ (\zero) \Big )
\]
and recall from Section~\ref{S:homopotential} that 
$\ext_\Sigma^*( \mathbf z) = \robin_{\nice}^+ (\mathbf z):=\max \{0, \robin_{\nice}(\mathbf z)\}$.
Assume that $\{F_n\}$ is a sequence of polynomial mappings with integer coefficients
such that $\hgttwo_{\nice}(F_n) \to 0$ as $n \to \infty$.
Since $\res (F_n) \in \Z$ and 
$\max \{0, \robin_{\nice}(\mathbf z)\} \geq 0$, we observe that
\[
{\frac{1}{(\deg F_n)^2}} \log \vert \res (F_n) \vert \to 0
\text{\ \ and \ }
{\frac{1}{(\deg F_n)^2}} \sum_{\zero \in F_n^{-1}(0,0)} \robin_{\nice}^+ (\zero) \to 0
\]
as $n \to \infty$.
These observations will be used in the following proof.

\begin{proof}[The proof of Theorem~\ref{T:homo_equi}]
We give the proof in the case where $\nice$ is compact, circled and pseudoconvex; in the appendix we indicate the minor 
modifications if $\nice$ is just compact, circled and nonpluripolar.
We first show that every subsequence of $\{\mu_n\}$ has a convergent subsequence.
Let $\Omega$ be a closed subset of $\C^2 \setminus \nice$. Observe that
\begin{align}
0 \leq \mu_n(\Omega) \min_{\zero \in \Omega \cap F_n^{-1}(0,0)} \robin_{\nice}^+ (\zero) 
\leq \frac{1}{(\deg F_n)^2} \sum_{\substack{\zero \in  \Omega \cap F_n^{-1}(0,0)}} \robin_{\nice}^+ (\zero)
\leq \hgttwo_{\nice}(F_n). \notag
\end{align}
Since
$
\hgttwo_{\nice}(F_n)\to 0
$
as $n \to \infty$, 
and $\robin_{\nice}^+(\mathbf z) \geq \delta=\delta(\Omega) >0$ for any $\mathbf z \in \Omega$,
we conclude that $\mu_n(\Omega) \to 0$ as $n \to \infty$.
This shows that the sequence $\{\mu_n\}$ is tight.
By Prokhorov's theorem, the tightness of $\{\mu_n\}$ implies that
every subsequence of $\{\mu_n\}$ has a convergent subsequence. 
Note that the limit of any such convergent subsequence is a probability measure whose support
is contained in $\nice$. If we can further show that any such limiting measure is equal 
to $\mu_{\Sigma}$, then the result will follow.
\smallskip

For simplicity, we will use the same notation $\{\mu_n\}$ to refer to any such convergent subsequence. 
Let $\mu$ denote the limit of $\{\mu_n\}$.
Let $h_M (\mathbf z, \mathbf w):= \min \{M, - \log {\vert \mathbf z \wedge \mathbf w \vert}\}$.
Observe that $h_M (\mathbf z, \mathbf w)$ is a continuous function in $\mathbf z$ and $\mathbf w$ on $\C^2 \times \C^2$, 
and that $h_M (\mathbf z, \mathbf w)$ increases to $-\log {\vert \mathbf z \wedge \mathbf w \vert}$ as $M \to \infty$.
Every continuous function $g(\mathbf z, \mathbf w)$ on a compact subset 
$\Omega \times \Omega$ of $\C^2 \times \C^2$ can be uniformly approximated 
by finite sums of the form
$\sum g_1(\mathbf z)g_2(\mathbf w)$, where $g_1$ and $g_2$ are continuous on $\Omega$. Thus $\mu_n \times \mu_n \to \mu \times \mu$
as $n \to \infty$ in the weak topology. 
Let $\lar:= \{\Vert \mathbf z \Vert  \leq r\}$. 
By the monotone convergence theorem and 
the definition of weak convergence of measures, for any sufficiently large $r>0$,
we have
\begin{align}
\I(\mu) 
%= - \int \int \log \vert z \wedge w \vert \, d\mu(z) \, d\mu(w)
&= \lim_{M \to \infty} \int_{\lar} \int_{\lar} h_M (\mathbf z, \mathbf w) \, d\mu(\mathbf z) \, d\mu(\mathbf w) \notag \\
&=\lim_{M \to \infty} \lim_{n \to \infty} \int_{\lar} \int_{\lar} h_M (\mathbf z, \mathbf w) \, d\mu_{n}(\mathbf z) \, d\mu_n(\mathbf w). \label{E:weak_con}
\end{align}
Let $d_n:= \deg F_n$.
Let $\zero_{n,1}, \dots, \zero_{n, d_n}$ be any $d_n$ zeros of $F_n$ such that 
$\zero_{n,i} \wedge \zero_{n,k} \neq 0$ for $1 \leq i \neq k \leq d_n$.
Assume that these zeros are ordered in
such a way that
$\Vert \zero_{n,i}\Vert \leq \Vert \zero_{n,k}\Vert$ for $1 \leq i<k \leq d_n$.
Let $j_n \in \N$ be such that $\zero_{n,i} \in \lar$ if and only if $i \leq j_n$.
As observed above, for sufficiently large $r>0$, $\mu_n(\lar) \to 1$ as $n \to \infty$, 
implying that $j_n/ d_n \to 1$ as $n \to \infty$.
Recall that any other zero of $F_n$ is a multiple of one of the zeros listed above, 
scaled by some $d_n$-th roots of unity.
Therefore, we have
\begin{align}
\lim_{n \to \infty} \int_{\lar} \int_{\lar} h_M (\mathbf z, \mathbf w) \, d\mu_{n}(\mathbf z) \, d\mu_n(\mathbf w) 
&\leq \lim_{n \to \infty} \Big( \frac{M d_n^3}{d_n^4} 
+ \frac{1}{d_n^4} \sum_{\substack{\zero \wedge \eta \neq 0 \\ \zero, \eta \in \lar \cap F_n^{-1}(0,0)}} h_M (\zero, \eta) \Big) \notag \\ 
&\leq \liminf_{n \to \infty} \frac{d_n^2}{d_n^4}\sum_{1 \leq i \neq k \leq j_n} \log \frac{1}{\vert \zero_{n,i} \wedge \zero_{n,k} \vert}. \label{E:lim_int}
\end{align}
To estimate the last term, consider
\begin{align}
\prod_{1 \leq i \neq k \leq d_n} \vert \zero_{n,i} \wedge \zero_{n,k} \vert
&=\prod_{1 \leq i \neq k \leq j_n} \vert \zero_{n,i} \wedge \zero_{n,k} \vert \notag
\prod_{\substack{1 \leq i < k\\ j_n <k \leq d_n}} \vert \zero_{n,i} \wedge \zero_{n,k} \vert^2\\
&\leq \prod_{1 \leq i \neq k \leq j_n} \vert \zero_{n,i} \wedge \zero_{n,k} \vert
\prod_{\substack{1 \leq i < k\\ j_n <k \leq d_n}} (2 \Vert \zero_{n,i} \Vert \, \Vert \zero_{n,k} \Vert)^2 \notag\\
&\leq \prod_{1 \leq i \neq k \leq j_n} \vert \zero_{n,i} \wedge \zero_{n,k} \vert
\prod_{j_n <k \leq d_n} (2 \Vert \zero_{n,k} \Vert^2)^{2(d_n-1)} \notag\\
&\leq 4^{(d_n-1)(d_n-j_n)} \prod_{1 \leq i \neq k \leq j_n} \vert \zero_{n,i} \wedge \zero_{n,k} \vert
\prod_{j_n <k \leq d_n} \Vert \zero_{n,k} \Vert^{4(d_n-1)}. \notag
\end{align}
Therefore, we conclude that
\begin{equation}\label{E:estimate_1}
\liminf_{n \to \infty} \prod_{1 \leq i \neq k \leq j_n} \vert \zero_{n,i} \wedge \zero_{n,k} \vert^{\frac{1}{d_n^2}}
\geq \frac{\liminf_{n \to \infty} \prod_{1 \leq i \neq k \leq d_n} \vert \zero_{n,i} \wedge \zero_{n,k} \vert^{\frac{1}{d_n^2}}}
{\limsup_{n \to \infty} \prod_{j_n <k \leq d_n} \Vert \zero_{n,k} \Vert^{\frac{4(d_n-1)}{d_n^2}}}.
\end{equation}
By Lemma~\ref{L:funda_integer} and the assumption that $\hgttwo_{\nice}(F_n) \to 0$ as $n \to \infty$, we have
\begin{equation}\label{for4.6a}
\liminf_{n \to \infty} \prod_{1 \leq i \neq k \leq d_n} \vert \zero_{n,i} \wedge \zero_{n,k} \vert^{\frac{1}{d_n^2}}
\geq \liminf_{n \to \infty} \left(\vert \res(F_n) \vert^{\frac{2-2d_n}{d_n}} \right)^{\frac{1}{d_n^2}} 
= 1.
\end{equation}
Since $\nice$ is circled and pseudoconvex, by definition, it follows that
\begin{equation}\label{E:infsup}
-\infty< \inf_{\Vert \mathbf z \Vert=1} \robin_{\nice}(\mathbf z) \leq \sup_{\Vert \mathbf z \Vert=1} \robin_{\nice}(\mathbf z) <\infty.
\end{equation}
Thus by the logarithmic homogenity of $\robin_{\nice}(\mathbf z)$, 
$\robin_{\nice}(\mathbf z) - \log \Vert \mathbf z \Vert=O(1)$ as $\Vert \mathbf z \Vert \to \infty$.
Consequently, there exists a constant $C$ such that for all $ \Vert \mathbf z  \Vert >r$ 
(with $r$ sufficiently large), we have
$ \Vert \mathbf z  \Vert \leq C \exp (\robin_{\nice}(\mathbf z))$. Now consider
\begin{align}
\prod_{j_n <k \leq d_n} \Vert \zero_{n,k} \Vert^{\frac{4(d_n-1)}{d_n^2}}
%= \prod_{\substack{\zero \in F_n^{-1}(0,0) \\ \zero \notin \lar}} {\Vert \zero \Vert^{\frac{1}{d_n}}}^{\frac{4(d_n-1)}{d_n^2}}
= \prod_{\substack{\zero \in F_n^{-1}(0,0) \\ \zero \notin \lar}} \Vert \zero \Vert^{\frac{4(d_n-1)}{d_n^3}}
&\leq C^{\frac{4(d_n-j_n) (d_n-1)}{d_n^2}}\prod_{\zero \in F_n^{-1}(0,0)} \exp (\robin_{\nice}^+ (\zero))^{\frac{4}{d_n^2}}. \notag
%&\leq C^{\frac{4(d_n-j_n) (d_n-1)}{d_n^2}} \prod_{\zero \in F_n^{-1}(0,0)} \max \{1, \rho_{\nice}(\zero)\}^{\frac{4}{d_n^2}}. \notag
\end{align}
Since $j_n/d_n \to 1$ and $\hgttwo_{\nice}(F_n) \to 0$ as $n \to \infty$, 
both terms on the right-hand side tend to $1$ as $n\to \infty$. Thus 
\begin{equation}\label{for4.6b}
\limsup_{n \to \infty} \prod_{j_n <k \leq d_n} \Vert \zero_{n,k} \Vert^{\frac{4(d_n-1)}{d_n^2}} \leq 1.
\end{equation}
Using the estimates (\ref{for4.6a}) and (\ref{for4.6b}) in \eqref{E:estimate_1}, we get
\begin{equation}\label{E:estimate_2}
\liminf_{n \to \infty} \prod_{1 \leq i \neq k \leq j_n} \vert \zero_{n,i} \wedge \zero_{n,k} \vert^{\frac{1}{d_n^2}}
\geq 1.
\end{equation}
Finally, by combining the estimate in \eqref{E:estimate_2} with \eqref{E:weak_con} and \eqref{E:lim_int},
we obtain $\I(\mu) \leq 0$. Since the homogeneous capacity of $\nice$ is $1$, it follows that $\I(\mu)=0$.
\smallskip

According to Lemma~\ref{L:sub_inv}, the measure $\mu$ is $S^1$-invariant. 
Therefore, by Result~\ref{R:unique}, any such measure 
is unique and must be equal to $\mu_{\nice}$. This completes the proof.
\end{proof}

\begin{remark} For $\nice=\{(z_1,z_2):|z_1|\leq 1, \ |z_2|\leq 1\}$ the unit polydisk, it is easy to construct 
examples of sequences $\{F_n\}$ satisfying the hypotheses of Theorem~\ref{T:homo_equi}, 
e.g., $F_n(z_1,z_2)=(z_1^n -1, z_2^n -1)$. It would be interesting to construct explicit examples for 
other compact, circled pseudoconvex sets $\nice$ of homogeneous capacity one. It is unclear how to 
do this for other polydisks $\nice =\{(z_1,z_2):|z_1|\leq r_1, \ |z_2|\leq r_2\}$ with $r_1r_2=1$ or for the 
Euclidean ball $\{\Vert \mathbf{z} \Vert \leq e^{1/4}\}$.
\smallskip

It would be interesting to determine some reasonable and verifiable sufficient conditions on 
sequences $\{F_n\}$ with {\it complex} coefficients to have equidistribution. In \cite{BLL:hilbert}, existence 
of such sequences is shown for any compact, circled pseudoconvex set in $\C^2$.
\end{remark}

\section{Proof of Theorem~\ref{T:units_equi}}\label{S:units_equi}

This section is devoted to proving Theorem~\ref{T:units_equi}, where our goal is to show that the zeros of
certain sequences of polynomials 
$\{p_n\}$ in $\mathbb{Z}[z]$ are equidistributed with respect to a fixed measure on a compact set $K$ 
whose capacity with respect to 0 and $\infty$ is equal to one.  
The key idea is to apply Theorem~\ref{T:homo_equi} to a lift of the polynomials $p_n$ 
and to a compact circled subset of $\C^2$ derived from $K$. 
We begin with the construction of this set derived from $K$.
\smallskip

Given a non-polar compact subset $K$ of $\C\setminus \{0\}$,
we define a compact circled set $\widehat{K}$ in $\C^2$
whose homogeneous capacity matches the capacity of $K$ with respect to 0 and $\infty$. 
Let $\mathbf{s}=(s_1, s_2)\in \mathscr{P}$ be a probability vector such that 
\begin{equation}\label{E:convex0}
s_1 \lim_{z \to 0} [\gr_K(z, 0) + \log |z|] +s_2 \gr_K(0, \infty)=\robcons(K) 
\end{equation}
and
\begin{equation}\label{E:convexinfty}
s_1\gr_K(\infty, 0)+ s_2 \lim_{z \to \infty} [ \gr_K(z, \infty) - \log |z|]=\robcons(K).
\end{equation}
As discussed in Section~\ref{S:prelim}, there always exists a probability vector
satisfying \eqref{E:convex0} and \eqref{E:convexinfty}, except in the following case.
Let $K$ be such that
$0$ and $\infty$ are in different components of $\sph\setminus K$
(thus $\gr_K(0, \infty)=\gr_K(\infty, 0) =0$) and one of 
$\lim_{z \to 0} [\gr_K(z, 0) + \log |z|]$ and $\lim_{z \to \infty} [ \gr_K(z, \infty) - \log |z|]$ 
is positive while the other is negative. We show that there does not exist a sequence of polynomials
satisfying the hypothesis of Theorem~\ref{T:units_equi}. Thus, in this case, Theorem~\ref{T:units_equi}
is vacuously true. To see this, assume that 
$\lim_{z \to 0} [\gr_K(z, 0) + \log |z|]$ is negative
and $\lim_{z \to \infty} [ \gr_K(z, \infty) - \log |z|]$ 
is positive (otherwise we work with $1/K$). Thus
$\capa (K)<1$. Now for, $p \in \Z[z]$ with $p(0)\not = 0$, recalling from Section \ref{S:intro} that
\[
\hgtone_K(p)=\frac{1}{\deg p} \Big(\log \vert p(0)\vert + \log \vert {\rm lead} (p)\vert + 
\sum_{p(x)=0}[\gr_K (x, 0) +\gr_K (x, \infty)] \Big),
\]
we observe that
\[
\hgtone_K(p)
%=\frac{1}{\deg p} \Big(\log \vert {\rm const} (p)\vert + \log \vert {\rm lead} (p)\vert +
%\sum_{p(x)=0}\gr_K (x, 0) +\gr_K (x, \infty) 
\geq \frac{1}{\deg p} \Big(\log \vert {\rm lead} (p)\vert + 
\sum_{p(x)=0} \gr_K (x, \infty)\Big).
\]
Since $\capa (K)<1$, it follows that there does {\bf not} exist a sequence $\{p_n\}$ in $\Z[z]$
with simple zeros for which
\[
\frac{1}{\deg p_n} \Big(\log \vert {\rm lead} (p_n)\vert + 
\sum_{p_n(x)=0} \gr_K (x, \infty)\Big) \to 0
\]
as $n \to \infty$ (see, for instance, \cite{NM}). Therefore, we conclude that there does not exist a sequence $\{p_n\}$ in $\Z[z]$
with simple zeros for which $\hgtone_K(p_n) \to 0$ as $n \to \infty$.

\smallskip

Given $K$ so that there exists a probability vector
satisfying \eqref{E:convex0} and \eqref{E:convexinfty}, we define the function $f_K$ on $\C^2$ as follows:
\begin{equation}\label{E:Robin}
f_K(z_1, z_2) =
\begin{cases}
s_1 \gr_K\big(\frac{z_1}{z_2}, 0\big) +s_2 \gr_K\big(\frac{z_1}{z_2}, \infty\big)+\log |z_1|^{s_1}|z_2|^{s_2} & \text{if } z_1z_2 \neq 0, \\
\robcons(K) + \log |z_2| & \text{if } z_1 = 0, \\
\robcons(K) + \log |z_1| & \text{if } z_2 = 0.
\end{cases}
\end{equation}
It is easy see that the function $f_K$ is logarithmically homogeneous. We now assume $K$ is regular 
as we will give the proof of Theorem~\ref{T:units_equi} for such $K$; in the appendix we indicate the minor 
modifications in the general non-polar setting. From \eqref{E:convex0} and \eqref{E:convexinfty}, it follows that
$f_K$ is continuous on $\C^2 \setminus \{0,0\}$.
Note that $f_K(0,0) = - \infty$. Since $\gr_K(z,0)$ and $\gr_K(z,\infty)$ are subharmonic,
$f_K$ is plurisubharmonic on $\C^2$.
As a result, the set $\widehat{K} :=\{ f_K \leq 0\}$ is compact, circled and pseudoconvex, 
%the function $f_K$ is the Robin Function of $\widehat{K}$
and $\robin_{\widehat K} \equiv f_K$.
\smallskip

We next see that the push forward of the Monge--Amp\`ere measure $\mu_{\widehat K}$ on $\widehat K$ (recall (\ref{mameas})) 
is the measure of our interest on the set $K$. 

\begin{lemma}\label{L:push}
 $\pi_*(\mu_{\widehat K}) = s_1 \nu_K^0 + s_2\nu_K^{\infty}$. 
\end{lemma}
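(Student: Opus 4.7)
The plan is to combine Result~\ref{pushf} with a direct computation of $dd^c f_K$ on the affine chart $\{z_2 = 1\}$.

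Since $\widehat K$ is compact, circled and pseudoconvex, we have $\robin_{\widehat K} \equiv f_K$, so Result~\ref{pushf} gives $\pi_*(\mu_{\widehat K}) = \nu$ for the unique probability measure $\nu$ on $\sph$ with $dd^c f_K = \pi^* \nu$. The task therefore reduces to identifying $\nu$ with $s_1 \nu_K^0 + s_2 \nu_K^\infty$.

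First I would restrict $f_K$ to the slice $\{z_2 = 1\}$, along which $\pi$ is the identity $z_1 \mapsto z_1 \in \C \subset \sph$. From \eqref{E:Robin}, for $z_1 \neq 0$,
$$
f_K(z_1, 1) = s_1\bigl[\gr_K(z_1, 0) + \log|z_1|\bigr] + s_2 \gr_K(z_1, \infty),
$$
and by \eqref{E:convex0} together with the regularity of $K$ this extends continuously to $z_1 = 0$ with value $\robcons(K) = f_K(0,1)$. Next I would apply $dd^c$ in $z_1$ and invoke the standard identities $dd^c \gr_K(\cdot, \infty) = \nu_K^\infty$ and $dd^c[\gr_K(\cdot,0) + \log|\cdot|] = \nu_K^0$, the latter holding because $\gr_K(z,0) + \log|z|$ is bounded near $0$, harmonic on $\sph \setminus K$, and has Riesz mass equal to the harmonic measure $\nu_K^0$ on $K$. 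This yields $dd^c f_K|_{\{z_2 = 1\}} = s_1\nu_K^0 + s_2 \nu_K^\infty$.

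Combining the chart computation with the global identity $dd^c f_K = \pi^* \nu$ supplied by Result~\ref{pushf} gives $\nu|_\C = s_1 \nu_K^0 + s_2 \nu_K^\infty$. Since $K \subset \C \setminus \{0\}$, the right-hand side is a probability measure supported in $\C \subset \sph$; as $\nu$ is also a probability measure, $\nu(\{\infty\}) = 0$ and the two measures must agree on all of $\sph$. The main subtlety I anticipate is that $f_K$ could a priori carry divisorial components along $\{z_2 = 0\}$ or $\{z_1 = 0\}$ that are invisible on the chosen chart; appealing to Result~\ref{pushf} to assert that $dd^c f_K$ is a pullback from $\sph$ in the first place sidesteps this issue, so the chart computation together with the fact that $K$ stays away from $0$ and $\infty$ is enough to conclude.
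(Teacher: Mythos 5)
Your argument is correct and follows essentially the same route as the paper: the paper's proof simply asserts that $dd^c f_K = \pi^*(s_1 \nu_K^0 + s_2 \nu_K^\infty)$ follows from the definition of $f_K$ and then invokes Result~\ref{pushf}, whereas you supply the details of that assertion by restricting $f_K$ to the chart $\{z_2 = 1\}$, computing $dd^c f_K(z_1,1) = s_1\nu_K^0 + s_2\nu_K^\infty$ via the identities $dd^c \gr_K(\cdot,\infty) = \nu_K^\infty$ and $dd^c[\gr_K(\cdot,0)+\log|\cdot|] = \nu_K^0$, and noting that since $\nu$ and the computed measure are both probability measures on $\sph$ agreeing on $\C$, they agree everywhere.
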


\begin{proof}
It follows from the definition of $f_K$ that
 $dd^c f_K = \pi^* (s_1 \nu_K^0 + s_2\nu_K^{\infty})$. Therefore,
by Result~\ref{pushf}, we have
$\pi_* (\mu_{\widehat K})= s_1 \nu_K^0 + s_2\nu_K^{\infty}$.
\end{proof}

The support of $\mu_{\widehat K}$ is contained in the set $\{\robin_{\widehat K} =0\}$, equivalently
$\{f_K =0\}$. Note that the support of $\nu_K^0$ is contained in the set $\{\gr_K(z, 0)=0\}$
and the support of $\nu_K^{\infty}$ is contained in the set $\{\gr_K(z, \infty)\}=0$.
Thus, by Lemma~\ref{L:push}, the support of $\mu_{\widehat K}$ is contained in the set
$$\Big\{(z_1, z_2) \in \C^2 :\gr_K\Big(\frac{z_1}{z_2}, 0\Big)= \gr_K\Big(\frac{z_1}{z_2}, \infty\Big)=0\Big\}.$$
Therefore, it follows from the definition of $f_K$ that the support of $\mu_{\widehat K}$ is 
contained in the set $\{|z_1|^{s_1}|z_2|^{s_2}=1$\}. This fact will be useful in proving the next lemma.

\begin{lemma}\label{L:capequal}
$\homcap(\widehat K)= \Cancap(K)$.
\end{lemma}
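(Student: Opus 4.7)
The plan is to show $\I(\mu_{\widehat K})=\robcons(K)$, since $\homcap(\widehat K)=e^{-\I(\mu_{\widehat K})}$ follows from the observation that any probability measure on $\widehat K$ has the same homogeneous energy as its $S^1$-symmetrization (because $|\lambda\mathbf z\wedge\tau\mathbf w|=|\mathbf z\wedge\mathbf w|$ whenever $|\lambda|=|\tau|=1$), together with Result~\ref{R:unique} identifying $\mu_{\widehat K}$ as the unique $S^1$-invariant energy minimizer.

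The first step is to push the double integral defining $\I(\mu_{\widehat K})$ down to $K$ via $\pi$. The support of $\mu_{\widehat K}$ lies in $\{f_K=0\}$, and combined with the localization of $\nu_K^0$ and $\nu_K^\infty$ on $\{\gr_K(\cdot,0)=\gr_K(\cdot,\infty)=0\}$ it consists of $(z_1,z_2)$ with $z_1 z_2\neq 0$, $\pi(\mathbf z)=z_1/z_2\in K$, and $|z_1|^{s_1}|z_2|^{s_2}=1$. Choosing the $S^1$-representative $\mathbf z^0=(|\zeta|^{s_2}e^{i\arg\zeta},|\zeta|^{-s_1})$ of each orbit (where $\zeta=\pi(\mathbf z)$), a short calculation gives $|z_1^0 w_2^0 - z_2^0 w_1^0|=|\zeta|^{-s_1}|\eta|^{-s_1}|\zeta-\eta|$, and the $S^1$-invariance of $|\mathbf z\wedge\mathbf w|$ makes this independent of the chosen representatives. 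Combining this with Lemma~\ref{L:push} yields
\[
\I(\mu_{\widehat K})= 2s_1\!\int_K\!\log|\zeta|\,d\tilde\mu(\zeta)+I(\tilde\mu),
\]
where $\tilde\mu=s_1\nu_K^0+s_2\nu_K^\infty$ and $I$ denotes logarithmic energy on $\C$.

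The second step evaluates both terms via classical potential theory. Writing $A,B,C,D$ for the entries of $\Gamma(K)$, the Frostman identity $U^{\nu_K^\infty}(z)=D-\gr_K(z,\infty)$, its counterpart $U^{\nu_K^0}(z)=C-\log|z|-\gr_K(z,0)$ (obtained by pushing $\nu_{1/K}^\infty$ under $z\mapsto 1/z$), and the fact that $\nu_K^0,\nu_K^\infty$ vanish on polar sets yield $I(\nu_K^\infty)=D$, $I(\nu_K^0)=2C-A$, $\int\log|\zeta|\,d\nu_K^\infty=B-D$, $\int\log|\zeta|\,d\nu_K^0=A-C$, and $-\iint\log|\zeta-\eta|\,d\nu_K^0(\zeta)\,d\nu_K^\infty(\eta)=D$. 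The symmetry $\gr_\Omega(0,\infty)=\gr_\Omega(\infty,0)$ of the Green's function on the component $\Omega$ of $\sph\setminus K$ containing both $0$ and $\infty$ (trivially $B=C=0$ when they lie in different components) forces $B=C$, consistent with the symmetry of $\Gamma(K)$ assumed in Section~\ref{S:value}. Expanding $\tilde\mu=s_1\nu_K^0+s_2\nu_K^\infty$ and substituting, a brief calculation gives
\[
\I(\mu_{\widehat K})=s_1^2 A+2s_1 s_2 B+s_2^2 D=s_1(s_1 A+s_2 B)+s_2(s_1 C+s_2 D)=\robcons(K),
\]
where the last equality invokes~\eqref{E:convex0} and~\eqref{E:convexinfty}. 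The main obstacle I anticipate is the careful bookkeeping of these potential-theoretic identities, particularly the symmetry $B=C$ and the evaluation of the cross-term $\iint \log|\zeta-\eta|\,d\nu_K^0\,d\nu_K^\infty$; once these are in hand, the algebra collapses via the defining equations for the probability vector $(s_1,s_2)$.
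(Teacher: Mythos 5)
Your proof is correct and follows essentially the same route as the paper: push $\I(\mu_{\widehat K})$ down to $K$ via $\pi$ using Lemma~\ref{L:push} and the constraint $|z_1|^{s_1}|z_2|^{s_2}=1$ on $\operatorname{supp}\mu_{\widehat K}$, then evaluate via Frostman's theorem and the limits \eqref{E:greeninfty}, \eqref{E:greenzero} to arrive at $s_1^2 A+2s_1s_2B+s_2^2D=\robcons(K)$ through \eqref{E:convex0}--\eqref{E:convexinfty}. You are a bit more explicit than the paper in two places that are worth noting: you justify $\homcap(\widehat K)=e^{-\I(\mu_{\widehat K})}$ by observing that $S^1$-symmetrization preserves homogeneous energy exactly (the paper invokes ``energy minimizing'' without comment), and you flag explicitly that the final algebra uses $B=C$, i.e., $\gr_K(0,\infty)=\gr_K(\infty,0)$, which the paper uses silently via the standing assumption in Section~\ref{S:value} that $\Gamma(K)$ is symmetric.
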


\begin{proof}
Writing $\mathbf z = (z_1, z_2)$ and $\mathbf w = (w_1, w_2)$, we have
%\[
%I(\mu_{\widehat K})= - \int \int \log \left \vert \frac{z_1}{z_2}- \frac{w_1}{w_2} \right\vert d\mu_{\widehat K}(\mathbf z)d\mu_{\widehat K}(\mathbf w)
%- \int \int \log \vert z_2 w_2 \vert d\mu_{\widehat K}(\mathbf z)d\mu_{\widehat K}(\mathbf w).
%\]
\begin{equation}\label{fiveone}
\I(\mu_{\widehat K})= - \int \int \log \left \vert \frac{z_1}{z_2}- \frac{w_1}{w_2} \right\vert d\mu_{\widehat K}(\mathbf z)d\mu_{\widehat K}(\mathbf w)
- 2 \int \log \vert z_2 \vert d\mu_{\widehat K}(\mathbf z).
\end{equation}
By Lemma~\ref{L:push}, we have $\pi_*(\mu_{\widehat K})= s_1 \nu_K^0 + s_2\nu_K^{\infty}$.
Therefore, the first term on the right side of (\ref{fiveone}) becomes
\begin{align}
-\int \int \log  \left \vert \frac{z_1}{z_2}- \frac{w_1}{w_2} \right \vert 
d\mu_{\widehat K}(\mathbf z)d\mu_{\widehat K}(\mathbf w)
=-s_1^2 &\int \int \log |z-w| d\nu_K^0(z) d\nu_K^{0}(w) \notag \\
-s_2^2 \int \int \log |z-w| d\nu_K^{\infty}(z) d\nu_K^{\infty}(w)
&-2s_1s_2 \int \int \log |z-w| d\nu_K^{\infty}(z) d\nu_K^{0}(w). \notag
\end{align}
Note that by Frostman's theorem, this last double integral simplifies to 
$$- \int \int \log |z-w| d\nu_K^{\infty}(z) d\nu_K^{0}(w) = \int I (\nu_K^{\infty}) d\nu_K^{0}(w)=I (\nu_K^{\infty}).$$
Next, since the support of $\mu_{\widehat K}$ is contained in the set $\{|z_1|^{s_1}|z_2|^{s_2}=1\}$, 
and again using $\pi_*(\mu_{\widehat K})= s_1 \nu_K^0 + s_2\nu_K^{\infty}$, the second term on the right 
side of (\ref{fiveone}) becomes
%\begin{align}
% - \int \int \log \vert z_2 w_2 \vert d\mu_{\widehat K}(\mathbf z)d\mu_{\widehat K}(\mathbf w)
%&= s_1 \int \int \log \left \vert \frac{z_1}{z_2}\cdot \frac{w_1}{w_2} \right \vert d\mu_{\widehat K}(\mathbf z)d\mu_{\widehat K}(\mathbf w) \notag \\
%&= 2s_1 \int \log |z| \, d (s_1 \nu_K^0 + s_2\nu_K^{\infty})(z) \notag \\
%&=s_1^2 \int \int \log |zw| d\nu_K^0(z) d\nu_K^{0}(w)
%+ 2s_1s_2 \int \log |z| d\nu_K^{\infty}(z). \notag
%\end{align}
\begin{align}
 - 2\int \log \vert z_2 \vert d\mu_{\widehat K}(\mathbf z)
&= 2   \int \log \left \vert \frac{z_1}{z_2} \right \vert^{s_1} d\mu_{\widehat K}(\mathbf z) 
-2 \int \log (\vert {z_1}\vert^{s_1} \vert {z_2} \vert ^{s_2}) d\mu_{\widehat K}(\mathbf z) \notag \\
&= 2s_1 \int \log |z| \, d [s_1 \nu_K^0 + s_2\nu_K^{\infty}](z)-0 \notag \\
&=s_1^2 \int \int \log |zw| d\nu_K^0(z) d\nu_K^{0}(w)
+ 2s_1s_2 \int \log |z| d\nu_K^{\infty}(z). \notag
\end{align}
%Now observe that this last integral is equal to {\red shall we mention Fubini?}
Observe that $U^{\delta_0}(z)= -\log |z|$. Thus by Fubini's theorem and the definition of the Green's function 
$\gr(z, \infty)$, it follows that
$$\int \log |z| d\nu_K^{\infty}(z) = -\int U^{\nu_K^{\infty}}(z)\,d \delta_0(z)= \gr_K(0, \infty)- I (\nu_K^{\infty}).$$
Putting these calculations together, we obtain
\begin{align}
\I(\mu_{\widehat K})= -s_1^2\int \int \log \left \vert \frac{z-w}{zw} \right \vert d\nu_K^0(z) & d\nu_K^{0}(w)
-s_2^2 \int \int \log |z-w| d\nu_K^{\infty}(z) d\nu_K^{\infty}(w) \notag\\
&+2 s_1s_2  I (\nu_K^{\infty}) + 2 s_1s_2  (\gr_K(0, \infty)- I (\nu_K^{\infty})).  \notag 
\end{align}
Finally, recall from \eqref{E:greeninfty} and \eqref{E:greenzero} that
\[
-\int \int \log |z-w| d\nu_K^{\infty}(z) d\nu_K^{\infty}(w)= \lim_{z \to \infty} [ \gr_K(z, \infty) - \log |z|],
\]
\[
- \int \int \log \left \vert \frac{z-w}{zw} \right \vert d\nu_K^0(z) d\nu_K^{0}(w) = \lim_{z \to 0} [\gr_K(z, 0) + \log |z|].
\]
Using these together with equations \eqref{E:convex0} and \eqref{E:convexinfty},
we finally obtain $\I(\mu_{\widehat K})= \robcons(K)$. Since $\mu_{\widehat K}$ is a  
homogeneous energy minimizing measure on $\widehat K$, we see that
\[\homcap(\widehat K)= e^{-\I (\mu_{\widehat K})}=  e^{-\robcons(K)}= \Cancap(K).\]
\end{proof}

%Recall from Section \ref{S:intro} that
%\[
%\hgtone_K(p)=\frac{1}{\deg p} \Big(\log \vert {\rm const} (p)\vert + \log \vert {\rm lead} (p)\vert + 
%\sum_{p(x)=0}[\gr_K (x, 0) +\gr_K (x, \infty)] \Big).
%\]
\noindent We are now ready for the proof of the main theorem.

\begin{proof}[The proof of Theorem~\ref{T:units_equi}]
Let $\{p_n\}$ be a sequence satisfying the hypotheses of Theorem~\ref{T:units_equi}.
We aim to show that the corresponding sequence of measures $\{\nu_n\}$ converges to $\nu_K$. 
Let $(s_1, s_2)$ be a probability vector corresponding to $K$
satisfying \eqref{E:convex0} and \eqref{E:convexinfty}.
Consider the mappings $F_n : \mathbb{C}^2 \to \mathbb{C}^2$ defined by
\[
F_n(z_1, z_2) = \Big(z_2^{\deg p_n}\,p_n \Big(\frac{z_1}{z_2} \Big), \ {z_1}^{m_n} z_2^{\deg p_n- m_n}-1\Big),
\]
where $m_n \in \N$ is such that
$m_n/ \deg p_n \to s_1$ as $n \to \infty$.
Thus $(\deg p_n- m_n)/ \deg p_n \to s_2$ as $n \to \infty$.
Since $p_n \in \Z[z]$,
it follows that each $F_n$ is a polynomial mapping with integer coefficients.
Furthermore, observe that $\deg F_n$ is equal to $\deg p_n$ for all $n$. 
\smallskip

We now apply Theorem~\ref{T:homo_equi} to the mappings $\{F_n\}$ and the compact set $\widehat K \subset \C^2$ 
defined as $\widehat K := \{f_K \leq 0\}$, where $f_K$ is as given in \eqref{E:Robin}.
We begin by showing that
$\hgttwo_{\widehat K}(F_n) \to 0$ as $n \to \infty$.
Let $(x_1, x_2) \in \C^2$ be a zero of $F_n$. Note that 
neither $x_1$ nor $x_2$ can be $0$, since ${x_1}^{m_n} x_2^{\deg p_n- m_n}=1$.
As a result, we conclude that $p_n ({x_1}/{x_2})=0$ and
$
x_1^{m_n} x_2^{\deg p_n- m_n} = 1.
$
This implies that $x_1/x_2$ is a zero of $p_n$, and we also have
\begin{equation}\label{E:zero}
x_1^{\frac{m_n}{\deg p_n}} x_2^{\frac{\deg p_n- m_n}{\deg p_n}} = 1.
\end{equation}
Since the polynomials $p_n$ have simple zeros,
observe that the mappings $F_n$ have $\deg F_n$ linearly independent zeros, i.e.,
the mappings $F_n$ are generic.
By hypothesis, $\hgtone_K(p_n) \to 0$ as $n \to \infty$,
so we deduce that
%\[
%\frac{1}{\deg p_n} \Big(\sum_{p_n(x)=0}\gr_K (x, 0) + \sum_{p_n(x)=0}\gr_K (x, \infty) \Big) \to 0  \quad \text{as } n \to \infty.
%\]
%Thus, we also have 
\begin{equation}\label{E:hgtgreen}
\frac{1}{\deg p_n} \sum_{p_n(x)=0}[s_1\gr_K (x, 0) + s_2\gr_K (x, \infty)]  \to 0
\end{equation}
as $n \to \infty$.
Note that if $\zero \in \C^2$ is a zero of $F_n$, then
$c_n\,\zero$ is also a zero of $F_n$ for any $(\deg F_n)^{th}$ root of unity $c_n$.
Recall that we have $\robin_{\widehat K} \equiv f_K$.
Since $m_n/ \deg p_n \to s_1$ and
$(\deg p_n- m_n)/ \deg p_n \to s_2$ as $n \to \infty$,
we conclude from \eqref{E:Robin}, \eqref{E:zero} and \eqref{E:hgtgreen} that
\[
\frac{1}{(\deg F_n)^2}  \sum_{\zero \in F_n^{-1}(0,0)}  \robin_{\widehat K}^+ (\zero)
=\frac{1}{(\deg F_n)^2}  \sum_{\zero \in F_n^{-1}(0,0)} \max \{0, \robin_{\widehat K}(\zero)\} 
\to 0
\]
as $n \to \infty$. We see from (\ref{resdef}) and (\ref{resdef2}) that
\begin{align}
|\res(F_n)| = |\trueres_{\deg p_n, \deg p_n} (p_n(z), z^{m_n})|
&= |({\rm lead} (p_n))^{\deg p_n} \prod_{p_n(x)=0}x^{m_n}| \notag \\
&= |({\rm lead} (p_n))^{\deg p_n-m_n} \cdot (p_n(0))^{m_n}|. \notag
\end{align}
Since $\hgtone_K(p_n) \to 0$ as $n \to \infty$,
it is easy to verify that
%\[
%\frac{1}{(\deg F_n)^2} \log \vert \res(F_n) \vert = \frac{1}{(\deg p_n)^2} 
%\log \vert {\rm lead} (p_n)^{\deg p_n-m_n} \cdot p_n(0)^{m_n}\vert\to 0
%\]
$({1}/{(\deg F_n)^2}) \log \vert \res(F_n) \vert \to 0$ as $n \to \infty$. 
Thus we conclude that $\hgttwo_{\widehat K}(F_n) \to 0$ as $n \to \infty$.
\smallskip

By hypothesis, $\Cancap(K)=1$. 
Therefore, using Lemma~\ref{L:capequal}, we can conclude that the homogeneous capacity of $\widehat K$ is also equal to one.
This allows us to apply Theorem~\ref{T:homo_equi} to $\widehat{K}$ and the sequence $\{F_n\}$, which yields
\[
\mu_n:=\frac{1}{(\deg F_n)^2} \sum_{F_n(\gamma)=(0,0)} \delta_{\gamma} \to \mu_{\widehat{K}}  \quad \text{as } n \to \infty.
\] 
As noted earlier, if $(x_1, x_2)$ is a zero of $F_n$, then $x_1/x_2$ is a zero of $p_n$. Consequently, we have
$\pi_*(\mu_n)=\nu_n$. Now, by Lemma~\ref{L:push}, we also know that
$\pi_*(\mu_{\widehat{K}})=s_1 \nu_K^0 + s_2\nu_K^{\infty}$. Therefore, it immediately follows that
\[
\nu_n=\frac{1}{\deg p_n} \sum_{p_n(x)=0} \delta_{x} \to s_1 \nu_K^0 + s_2\nu_K^{\infty} \quad \text{as } n \to \infty,
\]  
which completes the proof. 
\end{proof}

We conclude this section by providing examples of sets whose capacity with respect to $0$ and $\infty$ is equal to one. 
Specifically, we demonstrate that the preimage of a compact set $E$ with logarithmic capacity one under certain rational maps 
has the same capacity with respect to $0$ and $\infty$. As an illustration, we can choose $E$ to be either a circle of radius one or 
an interval of length four in the complex plane.
To establish this result, we require the following pull-back formula for Green's functions: let $\rat$ denote a rational map with poles at $\{x_i\}$ where $x_i$ has multiplicity $m_i$. Then, the following identity holds (cf., Lemma 2.1.4 \cite{Cantor:edtdsa80}):
\begin{equation}\label{pullb}
\sum_i m_i \gr_{\rat^{-1}(E)}(z, x_i)= \gr_E(\rat(z), \infty).
\end{equation}
Here $\gr_{\rat^{-1}(E)}(z, x_i)$ is the Green's function for $\rat^{-1}(E)$ with pole at $x_i$. We only require this formula when the poles are $0$ and $\infty$.

\begin{proposition}\label{P:capacityone}
Let $E \subset \C$ be a compact set with $\capa(E)=1$. 
Consider a rational map $\rat$ with complex coefficients given by
\[
\rat (z)= \frac{z^n + \dots \pm 1}{z^j},
\]
where $j < n$. Then $\Cancap(\rat^{-1}(E))=1$. 
Moreover, the probability vector 
$(\frac{j}{n}, \frac{n-j}{n})$ satisfies the conditions in \eqref{E:convex0} and \eqref{E:convexinfty}
for the set $\rat^{-1}(E)$.
\end{proposition}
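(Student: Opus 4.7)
The plan is to exploit the pull-back formula \eqref{pullb} together with the fact that $\capa(E)=1$ forces $\gr_E(w,\infty)$ to behave like $\log|w|$ at infinity. The rational map $\rat$ has a pole of order $j$ at $0$ (its numerator has constant term $\pm 1\neq 0$) and a pole of order $n-j$ at $\infty$, so with $K:=\rat^{-1}(E)$ the pull-back formula reads
\[
j\,\gr_K(z,0) + (n-j)\,\gr_K(z,\infty) \;=\; \gr_E(\rat(z),\infty).
\]
Here $K$ is a compact, non-polar subset of $\C\setminus\{0\}$, so both Green's functions of $K$ (with poles at $0$ and $\infty$) are well defined; and since $I(\nu_E^\infty)=-\log \capa(E)=0$, one has the asymptotic $\gr_E(w,\infty)-\log|w|\to 0$ as $w\to\infty$.

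The next step is to read off the four entries of $\Gamma(K)$ by sending $z\to 0$ and $z\to\infty$ in the identity above. As $z\to 0$, one has $\log|\rat(z)|=-j\log|z|+o(1)$ (the numerator tends to $\pm 1$, which has modulus $1$), and combining this with $\gr_E(\rat(z),\infty)=\log|\rat(z)|+o(1)$ gives
\[
j\,[\gr_K(z,0)+\log|z|] + (n-j)\,\gr_K(z,\infty) \;=\; o(1) \quad \text{as } z\to 0,
\]
hence $j\,\Gamma(K)_{11}+(n-j)\,\Gamma(K)_{12}=0$. An analogous computation at $\infty$, using $\log|\rat(z)|=(n-j)\log|z|+o(1)$ (leading coefficient of the numerator is $1$), yields $j\,\Gamma(K)_{21}+(n-j)\,\Gamma(K)_{22}=0$.

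Dividing both identities by $n$ shows that the probability vector $\mathbf{s}=(j/n,(n-j)/n)$ satisfies
\[
s_1\Gamma(K)_{11}+s_2\Gamma(K)_{12}\;=\;s_1\Gamma(K)_{21}+s_2\Gamma(K)_{22}\;=\;0.
\]
By the characterization of $\text{val}(\Gamma)$ recorded just after \eqref{eq:fun_def}, this common value $0$ must equal $\text{val}(\Gamma(K))=\robcons(K)$, so $\Cancap(K)=e^{-\robcons(K)}=1$ and $\mathbf{s}$ is the desired vector realizing \eqref{E:convex0} and \eqref{E:convexinfty} for $K$. I do not anticipate a significant obstacle beyond the delicate observation that the asymptotics of $\log|\rat(z)|$ at $0$ and $\infty$ contain no stray additive constant, which is exactly what the normalizations ($\pm 1$ constant term and monic leading term of the numerator) of $\rat$ are arranged to guarantee; an extra constant on either side would prevent both matrix linear combinations from equalling $0$ simultaneously and would generally push $\robcons(K)$ away from $0$.
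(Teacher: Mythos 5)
Your proposal is correct and follows essentially the same route as the paper: both apply the pull-back formula \eqref{pullb} with poles at $0$ and $\infty$, extract the relevant asymptotics of $\log|\rat(z)|$ at $0$ and $\infty$ from the $\pm 1$ constant term and monic leading term, and then use the characterization of $\text{val}(\Gamma)$ from Section~\ref{S:value} to deduce $\robcons(K)=0$ and hence $\Cancap(K)=1$. No substantive difference from the paper's argument.
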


\begin{proof}
Let $K:= \rat^{-1}(E)$, and let $\Gamma (K)$ be the matrix associated with $K$ as described in 
Section~\ref{S:Cantor}.
We show that for the probability vector $(\frac{j}{n}, \frac{n-j}{n})$, the corresponding weighted sums are zero.
It then follows from the discussion in Section~\ref{S:value} that the value of $\Gamma (K)$
is 0. By the pull-back formula (\ref{pullb}) for Green's functions, we have
$$j\gr_K(z, 0)+(n-j) \gr_K(z, \infty)=\gr_E (\rat (z), \infty),$$
which leads to the following expression:
\[
j \lim_{z \to 0} [\gr_K(z, 0) + \log |z|] +(n-j) \gr_K(0, \infty)= \lim_{z \to 0}[\gr_E (\rat (z), \infty)+ j \log |z|].
\]
Note that $\log |\rat (z)|= \log |z^n + \dots \pm 1|- j \log |z|$. Thus, we obtain
\[
\lim_{z \to 0} [\gr_E (\rat (z), \infty)+ j \log |z|]= \lim_{z \to 0}[ \gr_E (\rat (z), \infty)- \log |\rat (z)| + \log |z^n + \dots \pm 1|]=0.
\]
The last equality follows because the logarithmic capacity
of the set $E$ is equal to one. Similarly, we have
\[
j \gr_K(\infty, 0) + (n-j) \lim_{z \to \infty} [ \gr_K(z, \infty) - \log |z|]= \lim_{z \to \infty} [\gr_E (\rat (z), \infty)- (n-j) \log |z|].
\]
Since $n >j$, observe that $\log |\rat (z)|= (n-j) \log |z| + o(1)$ as $z \to \infty$. Therefore
\[
\lim_{z \to \infty} [\gr_E (\rat (z), \infty)- (n-j) \log |z|]= \lim_{z \to \infty} [\gr_E (\rat (z), \infty)- \log |\rat(z)|]=0.
\]
Hence, for the probability vector $(s_1,s_2)=(\frac{j}{n}, \frac{n-j}{n})$, the corresponding weighted sums in  \eqref{E:convex0} and \eqref{E:convexinfty} are zero,
and we conclude that $\Cancap(\rat^{-1}(E))=1$.
\end{proof}

Note that for $K:=\rat^{-1}(E)$, where $\rat$ and $E$ are as in the above proposition, the limiting measure $\nu_K$
in the statement of Theorem~\ref{T:units_equi} is 
$\frac{j}{n} \nu_K^0 + \frac{n-j}{n}\nu_K^{\infty}$.

%{\blue \begin{example}
%What if $K$ contains either $0$ or $\infty$?
%\end{example}

\begin{remark} 
One can define the capacity of a set $K$ with respect to points $a,b\in \C$ lying outside of $K$ by considering the matrix
\[
\Gamma_{a,b} (K):=
\begin{bmatrix}
\displaystyle \lim_{z \to a} [\gr_K(z, a) + \log |z-a|] & \gr_K(a, b) \\
\gr_K(b, a) & \displaystyle \lim_{z \to b} [ \gr_K(z, b) + \log |z-b|]
\end{bmatrix}.
\]
to define the Robin's constant of $K$ with respect to $a,b$ as $\gamma_{a,b}(K):=\textnormal{val}(\Gamma_{a,b} (K))$; then 
the Cantor capacity of $K$ with respect to $a$ and $b$ is ${\rm cap}_{a,b}(K):=e^{-\gamma_{a,b}(K)}$.
However, given a set $K$ with ${\rm cap}_{a,b}(K)=1$, it is unclear how to define $\widehat K \subset \C^2$
so that $\pi(\widehat K)=K$ and $\homcap (\widehat K) =1$. Moreover, it is unclear how to prove an equidistribution result in this setting using Theorem~\ref{T:homo_equi} as one also needs to be able to lift 
univariate polynomials $\{p_n\}$ to polynomial mappings on $\C^2$ in an appropriate fashion. 
\end{remark}

\section{Appendix: Theorems~\ref{T:units_equi} and ~\ref{T:homo_equi}, general case}\label{S:nonregular}

To generalize Theorem~\ref{T:units_equi} to nonpolar compacta $K$ and Theorem~\ref{T:homo_equi} to compact, circled nonpluripolar 
$\nice$, recall that for $K$ regular, $\widehat{K}$ was defined in Section \ref{S:units_equi} as $\widehat{K}:=\{f_K \leq 0\}$. 
In the general nonpolar case, since this set may not be closed, we take $\widehat{K}$ to be the closure of the set $\{f_K \leq 0\}$. 
In this case, $\widehat{K}$ is a subset of the union of $\{f_K \leq 0\}$ and the set
of discontinuities of $f_K$. The function $f_K$ is discontinuous at
points $(z_1, z_2)$ such that $\gr_K(\cdot,0)$ or $\gr_K(\cdot ,\infty)$ are discontinuous at the point $z_1/z_2$. 
Since the set of these discontinuities in $\C$ is a polar set, the set
of discontinuities of $f_K$ is a pluripolar set. Therefore, the pluricomplex
Green's functions and hence the Robin functions for $\widehat{K}$ and $\{f_K \leq 0\}$ coincide (see \cite[Corollary 5.2.5]{Klimek91}).
Moreover, since $f_K$ is the Robin function of the Borel set $\{f_K \leq 0\}$, we conclude that
$\robin_{\widehat K} \equiv f_K$. Thus Theorem~\ref{T:units_equi} generalizes to nonpolar compacta $K$ provided we can show 
that Theorem~\ref{T:homo_equi} generalizes to compact, circled nonpluripolar $\nice$.
\smallskip

To this end, we first remark that (\ref{E:infsup})
$$-\infty< \inf_{\Vert \mathbf z \Vert=1} \robin_{\nice}(\mathbf z) \leq \sup_{\Vert \mathbf z \Vert=1} \robin_{\nice}(\mathbf z) <\infty$$
still holds in this setting; i.e., only non-pluripolarity of $\nice$ is needed. The only other items to check are that Results \ref{pushf} 
and \ref{R:unique} remain valid. For the former, the proof in \cite{DeM:dorm03} is still valid as for $\zeta \in \sph$, while 
the measure $dd^cV_{\nice}^*|_{\pi^{-1}(\zeta)}$ is normalized Lebesgue measure 
$m_{\zeta}$ on $\partial \nice \cap \pi^{-1}(\zeta)$ only when this set is a circle, this occurs for a.e. $\zeta \in \sph$, 
which suffices for the conclusion. 
\smallskip

For Result \ref{R:unique} we must show for any compact, circled and nonpluripolar $\nice \subset \C^2$, 
the measure $\mu_\nice$
is the unique $S^1$-invariant homogeneous energy minimizing measure for $\nice$. 
To modify the argument in \cite[Theorem 3.1]{DeM:dorm03}, we first observe 
that an energy minimizing measure $\mu$ is supported in the set $\nice \setminus \{\robin_{\nice} < 0\}$.
The sets $\nice \setminus \{\robin_{\nice} < 0\}$ and $\{\robin_{\nice} = 0\}$ differ
by a pluripolar set and thus $\robin_{\nice}\equiv 0$ on ${{\rm supp}\, \mu}$
except possibly on a pluripolar set $\ppolar$. It follows by logarithmic homogeneity of $\U^{\mu}$ that 
$$\U^{\mu} \leq -\I(\mu) +\robin_{\nice}$$ 
on $\C^2$. To show the reverse inequality, for $c\in \C$, let $L_c=\{(c,z_2): z_2 \in \C\}$ be the complex 
line $z_1=c$. It is easy to see that 
$$C:=\{c\in \C: L_c \cap \ppolar \ \hbox{is not polar in} \ L_c\}$$ 
is a polar set in $\C$. For $L_c$ with $c\not \in C$, 
we let $S_c:=\hbox{supp}\ dd^c \U^{\mu}|_{L_c}$ and $v_c:=\U^{\mu}|_{L_c} + \I(\mu)$. 
Then the subharmonic function $u_c:= \robin_{\nice}|_{L_c}-v_c$ on $L_c \setminus S_c$ satisfies 
$$\limsup_{(c,z_2) \to (c,\zeta)}u_c(c,z_2) \leq 0 \ \hbox{for} \ (c,\zeta) \in S_c \setminus E_c$$ 
where $E_c$ is polar. By the extended maximum principle, $u_c  \leq 0$ on $L_c \setminus S_c$. 
Since this inequality remains true on $S_c$, we have $$\U^{\mu}\geq \robin_{\nice} -\I(\mu)$$ 
on all $L_c$ with $c \not \in C$. The union of the collection of lines $L_c$ with $c\in C$ is pluripolar and hence has measure $0$; thus  
$\U^{\mu}\geq \robin_{\nice} -\I(\mu)$
on all of $\C^2$.

\end{document}